\theoremstyle{plain}
\newtheorem{thm}{Theorem}[section] \newtheorem{lem}[thm]{Lemma}  \newtheorem{cor}[thm]{Corollary}
\theoremstyle{definition}     
\theoremstyle{remark}  \newtheorem*{remark}{Remark}
\begin{document}

\title{A Central Limit Theorem for the Ewens--Pitman random partition in the large-$\theta$ regime via a martingale approach}
\author{B. Bercu$^1$  \and C. Contardi$^2$ \and E. Dolera$^2$ \and S. Favaro$^3$ \and \\ {\small $^1$Institut de Math\'ematiques de Bordeaux, University of Bordeaux, France} \\ {\small $^2$Department of Mathematics, University of Pavia, Italy } \\ {\small $^3$Department of Economics and Statistics, University of Torino and Collegio Carlo Alberto, Italy}}

\date{}

\maketitle

\begin{abstract}
The Ewens-Pitman model defines a distribution on random partitions of $\{1,\ldots,n\}$, with parameters $\alpha \in [0,1)$ and $\theta > -\alpha$; the case $\alpha=0$ reduces to the classical Ewens model from population genetics. We investigate the large-$n$ asymptotic behaviour of the Ewens-Pitman random partition in the nonstandard regime $\theta=\lambda n$ with $\lambda>0$, establishing joint fluctuation results for the total number of blocks $K_n^{\{n\}}$ and the counts $K_{r,n}^{\{n\}}$ of blocks of sizes $r=1,\dots,d$, for fixed $d\in\mathbb{N}$. In particular, for $\alpha\in[0,1)$ and $\theta=\lambda n$, our main result provides a strong law of large numbers and a central limit theorem for the $(d+1)$-dimensional vector $\mathbf{K}_{d,n}^{\{n\}} = \bigl(K_n^{\{n\}}, K_{1,n}^{\{n\}}, \dots, K_{d,n}^{\{n\}}\bigr)^T$ as $n \to \infty$. The proof exploits the Chinese restaurant sequential construction under $\theta=\lambda n$ and a central limit theorem for triangular arrays of martingales, extending techniques previously developed for the classical regime with fixed $\theta$. As corollaries of our results, we recover known asymptotics for $K_n^{\{n\}}$ and derive new strong laws and central limit theorems for each fixed $K_{r,n}^{\{n\}}$, thereby completing earlier weak-law results and providing a comprehensive asymptotic description of the Ewens-Pitman partition structure in the large-$\theta$ setting.
\end{abstract}

\tableofcontents


\section{Introduction}
The Ewens-Pitman model for random partitions first appeared in \cite{Pit(95)} as a two-parameter generalization of the celebrated Ewens model in population genetics \cite{Ewe(72)}; see \cite{Cra(16)} and references therein. For $n\in\mathbb{N}$, denote by $\Pi_{n}$ a random partition of the set $\{1,\ldots,n\}$ into $K_{n}\in\{1,\ldots,n\}$ blocks of sizes (or empirical frequencies) $\mathbf{N}_{n}=(N_{1,n},\ldots,N_{K_{n},n})\in\mathbb{N}^{K_{n}}$ such that $n=\sum_{1\leq i\leq K_{n}}N_{i,n}$. For $\alpha\in[0,1)$ and $\theta>-\alpha$, the Ewens-Pitman model assigns to $\Pi_{n}$ the probability 
\begin{equation}\label{epsm}
P[K_{n}=k,\mathbf{N}_{n}=(n_{1},\ldots,n_{k})]=\frac{1}{k!}{n\choose n_{1},\ldots,n_{k}}\frac{(\theta)_{(k,\alpha)\uparrow}}{(\theta)_{n \uparrow}}\prod_{i=1}^{n}(1-\alpha)_{(n_{i}-1) \uparrow},
\end{equation}
where $(x)_{(n,a) \uparrow}$ denotes the rising factorial of $x$ of order $n$ and increment $a$, i.e. $(x)_{(n,a)\uparrow} :=\prod_{0\leq i\leq n-1}(x+ia)$ and $(x)_{n\uparrow} :=(x)_{(n,1)\uparrow }$. The distribution \eqref{epsm} admits a sequential construction in terms of the Chinese restaurant process \cite{Pit(95),Zab(05)} and a Poisson process construction by random sampling the two-parameter Poisson-Dirichlet distribution \cite{Per(92),Pit(97)}; see also \cite{Dol(20),Dol(21)} for a construction through the negative-Binomial compound Poisson model for random partitions \cite{Cha(07)}. For $\alpha=0$ the Ewens-Pitman model reduces to the Ewens model, arising by random sampling the Poisson-Dirichlet distribution \cite{Kin(75)}. The Ewens-Pitman model plays a critical role in a variety of research areas, e.g., population genetics, Bayesian statistics, combinatorics, machine learning and statistical physics. See \cite[Chapter 3]{Pit(06)} for an overview of the Ewens-Pitman model.

Under the Ewens-Pitman model \eqref{epsm}, there have been several works on the large $n$ asymptotic behaviour of $K_{n}$, showing different behaviours depending on whether $\alpha=0$ or $\alpha\in(0,1)$. Throughout, denote the almost sure and weak convergence by $\stackrel{a.s.}{\longrightarrow}$ and $\stackrel{w}{\longrightarrow}$, respectively. For $\alpha=0$ and $\theta>0$, \cite[Theorem 2.3]{Kor(73)} and Lindeberg-Feller central limit theorem show that as $n\rightarrow+\infty$
\begin{equation}\label{llnk_DP}
\frac{K_{n}}{\log n}\stackrel{a.s.}{\longrightarrow}\theta
\end{equation}
and 
\begin{equation}\label{cltk_DP}
\sqrt{\log n}\left(\frac{K_{n}}{\log n}-\theta\right)\stackrel{w}{\longrightarrow}\sqrt{\theta}\,\mathcal{N}(0,1),
\end{equation}
where $\mathcal{N}(0,1)$ denotes the standard Gaussian random variable. Instead, for $\alpha\in(0,1)$ and $\theta>-\alpha$, \cite[Theorem 3.8]{Pit(06)} and \cite[Theorem 2.1 and Theorem 2.3]{BF24} show that as $n\rightarrow+\infty$
\begin{equation}\label{as_limit}
\frac{K_{n}}{n^{\alpha}}\stackrel{a.s.}{\longrightarrow}S_{\alpha,\theta}
\end{equation}
and 
\begin{equation}\label{gaussian_limit_kn}
\sqrt{n^{\alpha}}\left(\frac{K_{n}}{n^{\alpha}}-S_{\alpha,\theta}\right)\stackrel{w}{\longrightarrow}\sqrt{\tilde{S}_{\alpha,\theta}}\,\mathcal{N}(0,1),
\end{equation}
where $S_{\alpha,\theta}$ and $\tilde{S}_{\alpha,\theta}$ are scaled Mittag-Leffler random variables \cite{Zol(86)}, sharing the same distribution, and $\tilde{S}_{\alpha,\theta}$ is independent of $\mathcal{N}(0,1)$. See \cite[Chapter 3]{Pit(06)} for details on \eqref{llnk_DP} and \eqref{as_limit}.

An equivalent description of the partition $\Pi_n$ is given, for every $n \in \mathbb{N}$, by the sequence $\mathbf{K}_{n} : = (K_n, K_{1,n}, ... , K_{n,n}, K_{n+1, n}, ...) $, where for $r \in \mathbb{N}$, $K_{r,n}$ denotes the number of blocks of $\Pi_n$ of size $r$, that is
\begin{displaymath}
K_{r,n} =\sum_{k = 1}^{K_n}\mathds{1}_{\{N_{k,n}=r\}}
\end{displaymath}
with $\mathds{1}$ denoting the indicator function. Clearly, for $r >n$, $K_{r,n} = 0$ a.s., and furthermore it is easy to verify that
$$
\sum_{r=1}^n K_{r,n} = K_n\quad  \text{and} \quad
\sum_{r=1}^n r \,  K_{r,n} = n.
$$
These quantities arise naturally from the sequential construction of the Ewens--Pitman distribution; see appendix \ref{app_pyp} for details. There are several known results regarding the asymptotic behaviour of $\mathbf{K}_n$ under the Ewens--Pitman model. In particular, \cite[Theorem 1]{Arr(92)} entails that for $\alpha = 0$
\begin{equation}\label{w_e_1}
K_{r,n}
\underset{n\rightarrow+\infty}{\overset{w}{\longrightarrow}}
Z_{r}
\end{equation}
for every $r \in \mathbb{N}$, 
and
\begin{equation}\label{w_e_2}
(K_{1,n}, K_{2,n}, \ldots)
\underset{n\rightarrow+\infty}{\stackrel{w}{\longrightarrow}} (Z_{1},Z_{2},\ldots),
\end{equation}
where the $Z_{r}$'s are independent Poisson random variables with $\mathbb{E}[Z_{r}]=\theta/r$, for all $r\geq1$. See \cite{Arr(03)} for generalizations and refinements of these asymptotic results. For $\alpha\in(0,1)$, it follows from \cite[Lemma 3.11]{Pit(06)}  that
\begin{equation}\label{as_ep_2}
\lim_{n\rightarrow+\infty}\frac{K_{r,n}}{n^{\alpha}}=p_{\alpha}(r)S_{\alpha,\theta}\qquad\text{a.s.},
\end{equation}
where
\begin{displaymath}
p_{\alpha}(r)=\frac{\alpha(1-\alpha)_{(r-1)\uparrow}}{r!} .
\end{displaymath}
We refer to  \cite[Theorem 3.3. and Theorem 3.4]{BF24} for Gaussian fluctuations and laws of iterated logarithm for $K_{r,n}$.  
%

\begin{remark} 
Beyond the almost-sure convergence and Gaussian fluctuations, $K_{n}$ and $K_{r, n}$ have been investigated with respect to large and moderate deviations \cite{Fen(98),Fav(14),Fav(18),BF25}. Non-asymptotic results for $K_{n}$ have been established in terms of Berry-Esseen theorems \cite{Dol(20)} and concentration inequalities \cite{Per(22),BF25}.
\end{remark}

\subsection{Main result}
Under the Ewens-Pitman model \eqref{epsm}, we study the large $n$ asymptotic behaviour of the random $(d+1)-$dimensional vector
\begin{displaymath}
\mathbf{K}_{d,n}^{\{n\}} = \left(K_n^{\{n\}},  K_{1, n}^{\{n\}}, ..., K_{d, n}^{\{n\}} \right)^T
\end{displaymath}
for fixed $d \in \mathbb{N}$, where the superscript $\{n\}$ denotes the fact that the parameter $\theta$ is allowed to depend linearly on $n\in\mathbb{N}$, namely $\theta=\lambda n$ with $\lambda>0$. This non-standard asymptotic regime first appeared in \cite{Fen(07)} for the the Ewens model, i.e. $\alpha=0$. In particular,  \cite[Proposition 2 and Theorem 2]{Tsu(17)} established a law of large numbers (LLN) and a central limit theorem (CLT) for  $K^{\{n\}}_n$ for $\alpha=0$ and $\theta=\lambda n$, later extended to $\alpha \in [0,1)$ by \cite[Theorem 1.2]{Con(25a)} and \cite[Theorems 1 and 2]{Rib(25)}. Further, \cite[Proposition 1]{Ber25} established a weak law of large number for the $K_{r,n}^{\{n\}}$. 

Denote by $\mathcal{S}_{+}^{d+1}(\mathbb{R})$ the set of symmetric and positive-semidefinite $(d+1) \times (d+1)$ real matrices. The next theorem states the main result of the paper, namely a LLN and a CLT for $\mathbf{K}_{d,n}^{\{n\}}$.

\begin{thm}\label{main_d}
Fix $d \ge 1$. For $n\in\mathbb{N}$, let $\mathbf{K}_{d,n}^{\{n\}}$
 be the random vector containing the number of partition blocks and the number of partition blocks of sizes $1, ..., d$ under the Ewens-Pitman model with parameters $\alpha\in[0,1)$ and $\theta=\lambda n$, with $\lambda>0$. Then, as $n\rightarrow+\infty$ there hold:
\begin{itemize}
\item[i)]
\begin{equation}\label{mom_m_d}
\mathbb{E}\left[\mathbf{K}_{d, n}^{\{n\}}\right] =n\,\mathfrak{M}_{d, \alpha, \lambda} + O(1)
\end{equation}
\item[ii)] (SLLN)
\begin{equation}\label{lln_d}
\frac{\mathbf{K}_{d, n}^{\{n\}}}{n}\stackrel{a.s.}{\longrightarrow}\mathfrak{M}_{d, \alpha, \lambda};
\end{equation}
\item[iii)] (CLT)
\begin{equation}\label{clt_d}
\frac{\mathbf{K}_{d, n}^{\{n\}}-n\, \mathfrak{M}_{d, \alpha, \lambda}}{\sqrt{n}}\stackrel{\text{w}}{\longrightarrow}\mathcal{N}(\mathbf{0},\Sigma_{d, \alpha, \lambda}).
\end{equation}
\end{itemize}
where $\mathbf{\mathfrak{M}}_{d, \alpha,\lambda} \in \mathbb{R}^{d+1}$
and $\Sigma_{d, \alpha, \lambda} \in \mathcal{S}_{+}^{d+1}(\mathbb{R})$ are explicit (see equations \eqref{mr} and \eqref{Sigma_d} below) and only depend on $\alpha$ and $\lambda$. 
\end{thm}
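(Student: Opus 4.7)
The plan is to exploit the sequential Chinese restaurant construction of the Ewens--Pitman partition with $\theta=\lambda n$, which makes each coordinate of $\mathbf{K}_{d,n}^{\{n\}}$ into a triangular-array process amenable to martingale arguments.

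For fixed $n$, let $(\Pi_m^{\{n\}})_{1\le m\le n}$ be generated by the Chinese restaurant with parameters $(\alpha,\lambda n)$, and set $\mathcal{F}_m^{\{n\}}:=\sigma(\Pi_m^{\{n\}})$. The standard seating rules give, for $r\ge 2$,
\begin{equation*}
\mathbb{E}\bigl[K_{r,m}^{\{n\}}-K_{r,m-1}^{\{n\}}\,\big|\,\mathcal{F}_{m-1}^{\{n\}}\bigr]=\frac{(r-1-\alpha)K_{r-1,m-1}^{\{n\}}-(r-\alpha)K_{r,m-1}^{\{n\}}}{\lambda n+m-1},
\end{equation*}
with analogous explicit expressions for $K_m^{\{n\}}$ and $K_{1,m}^{\{n\}}$. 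Summation produces the Doob decomposition $K_{r,m}^{\{n\}}=A_{r,m}^{\{n\}}+M_{r,m}^{\{n\}}$, where $A_{r,m}^{\{n\}}$ is predictable and $(M_{r,m}^{\{n\}})$ is a martingale with increments bounded by $2$. Taking expectations yields a closed triangular linear system for the vector $(\mathbb{E}[K_m^{\{n\}}],\mathbb{E}[K_{1,m}^{\{n\}}],\ldots,\mathbb{E}[K_{d,m}^{\{n\}}])$; starting from the known scalar asymptotics $\mathbb{E}[K_m^{\{n\}}]=n\mu_0(m/n)+O(1)$ established in \cite{Con(25a),Rib(25)} and iterating in $r$, one reads off \eqref{mom_m_d}, with the entries of $\mathfrak{M}_{d,\alpha,\lambda}$ turning out to be explicit rational functions of $\alpha$ and $\lambda$.

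For the CLT (iii), I would consider the rescaled $(d+1)$-dimensional martingale $\widetilde{\mathbf{M}}_m^{\{n\}}:=n^{-1/2}(M_m^{\{n\}},M_{1,m}^{\{n\}},\ldots,M_{d,m}^{\{n\}})^T$ indexed by $m=1,\ldots,n$. Uniform boundedness of the increments supplies the Lindeberg condition automatically, so the entire computation reduces to the predictable quadratic covariation. Because the step-$m$ transition is a single mutually exclusive event (open a new table, or join a table of some given size), at most two of the increments $\Delta K_{r,m}^{\{n\}}$ are simultaneously nonzero, and each conditional covariance is an explicit rational function of $K_{s,m-1}^{\{n\}}/n$ (for $s\le d+1$) and of $m/n$. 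Invoking a uniform-in-$m$ version of the SLLN (ii), each empirical ratio converges to $\mu_s(m/n)$ and the sum over $m$ becomes a Riemann sum converging in probability to an explicit integral $\Sigma_{d,\alpha,\lambda}=\int_0^1\Phi(t)\,dt\in\mathcal{S}_+^{d+1}(\mathbb{R})$; the multivariate martingale CLT for triangular arrays then gives \eqref{clt_d}. For the SLLN (ii), the Azuma--Hoeffding inequality applied to the bounded-increment martingale $M_{r,\cdot}^{\{n\}}$ yields $\mathbb{P}(|M_{r,n}^{\{n\}}|>n^{3/4})\le 2\exp(-c\sqrt{n})$; coupling all Chinese restaurants on a common probability space via i.i.d.\ uniforms driving the sequential choices, Borel--Cantelli delivers $M_{r,n}^{\{n\}}/n\to 0$ a.s., and combined with (i) this gives \eqref{lln_d}.

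The main obstacle is the \emph{nesting} of the three steps: the quadratic variation needed in the CLT for $M_{r,\cdot}^{\{n\}}$ depends on the empirical behaviour of $K_{r-1,\cdot}^{\{n\}}$ and $K_{r,\cdot}^{\{n\}}$ themselves, so the SLLN must be proved \emph{uniformly in $m\le n$} before the quadratic-variation limit can be identified. I would therefore run (i)--(iii) in a single induction on $r$, propagating a uniform error of order $\sqrt{(\log n)/n}$ at each stage; positive semidefiniteness of the limit $\Sigma_{d,\alpha,\lambda}$ follows for free since it is the limit of conditional covariance matrices.
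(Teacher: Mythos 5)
Your overall strategy mirrors the paper's: the CRP construction under $\theta=\lambda n$, a martingale decomposition, a triangular-array martingale CLT whose Lindeberg condition comes from bounded increments, and identification of the bracket as a Riemann sum via a uniform-in-time law of large numbers; your route to (i)--(ii) (solving the recursion for the means and using Azuma--Hoeffding plus Borel--Cantelli, with an induction on $r$) is a plausible alternative to the paper's route via explicit falling-factorial moments and a fourth-moment complete-convergence bound, uniform in $x\in[0,1]$. However, there is a genuine gap at the decisive step of (iii). You apply the martingale CLT to the plain additive Doob martingale $M_{r,\cdot}^{\{n\}}$ and assert that this ``gives \eqref{clt_d}''. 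What it actually gives is a Gaussian limit for $\bigl(\mathbf{K}_{d,n}^{\{n\}}-\mathbf{A}_{d,n}^{\{n\}}\bigr)/\sqrt{n}$, where the compensator $A_{r,n}^{\{n\}}=\sum_{m\le n}\bigl(p_{r,m-1}^{\{n\}}-q_{r,m-1}^{\{n\}}\bigr)$ is \emph{random}: it is a path functional of $K_{r-1,\cdot}^{\{n\}}$ and of $K_{r,\cdot}^{\{n\}}$ itself. Its fluctuations around $n\,\mathfrak{m}_{r;\alpha,\lambda}$ are of order exactly $\sqrt{n}$ (each summand fluctuates at order $\sqrt{n}/(\lambda n)$ and the $n$ summands are strongly positively correlated), so they are not removed by any LLN, they are correlated with the martingale part, and Slutsky does not apply. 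Your uniform error of order $\sqrt{(\log n)/n}$ controls the LLN scale, not this $\sqrt{n}$-scale feedback; consequently the matrix $\int_0^1\Phi(t)\,dt$ you would obtain is the bracket of the \emph{unscaled} martingale, which is not $\Sigma_{d,\alpha,\lambda}$, and the centering in your CLT is random rather than $n\,\mathfrak{M}_{d,\alpha,\lambda}$.

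The paper's construction is designed around exactly this point: instead of the Doob decomposition it uses the variation-of-constants martingale $M_{r,h}^{\{n\}}=a_{r,h}^{\{n\}}K_{r,h}^{\{n\}}-A_{r,h}^{\{n\}}$, where $a_{r,h}^{\{n\}}=\prod_{k<h}\bigl(\gamma_{r,k}^{\{n\}}\bigr)^{-1}$ is \emph{deterministic} and $\gamma_{r,h-1}^{\{n\}}=(\lambda n+h-1-r+\alpha)/(\lambda n+h-1)$ absorbs the self-interaction term $-(r-\alpha)K_{r,h-1}^{\{n\}}/(\lambda n+h-1)$ of the recursion $\mathbb{E}\bigl[K_{r,h}^{\{n\}}\mid\mathcal{F}_{h-1}^{\{n\}}\bigr]=\gamma_{r,h-1}^{\{n\}}K_{r,h-1}^{\{n\}}+\beta_{r,h-1}^{\{n\}}$ into a deterministic multiplicative factor; the martingale differences are then the centered jumps weighted by $a_{r,h}^{\{n\}}$, and these weights are precisely the source of the factors $\bigl((\lambda+1)/\lambda\bigr)^{r-\alpha}$ that relate $\Gamma_{d,\alpha,\lambda}$ to $\Sigma_{d,\alpha,\lambda}$ in \eqref{Sigma_d} — factors that are simply absent from your bracket. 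The conversion back to $\mathbf{K}_{d,n}^{\{n\}}$ is then performed through the deterministic matrix $\bigl(\mathcal{A}_{d,n}^{\{n\}}\bigr)^{-1}$ together with a separate treatment of the remaining compensator term. To repair your argument you would need either to adopt this deterministic rescaling, or to linearize the full triangular recursion and express $\mathbf{K}_{d,n}^{\{n\}}-n\,\mathfrak{M}_{d,\alpha,\lambda}$ as an asymptotically deterministic linear functional of all the martingale increments \emph{before} invoking the CLT; as written, the quadratic-variation computation alone cannot produce the stated limit.
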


Theorem \ref{main_d} provides a novel, comprehensive description of the behaviour of the Ewens-Pitman random partition under the non-standard asymptotic regime $\theta=\lambda n$, with $\lambda>0$. By comparing the LLN \eqref{lln_d} with the almost-sure fluctuation results \eqref{llnk_DP}, \eqref{as_limit}, \eqref{w_e_2} and \eqref{as_ep_2}, we can assess how the linear dependence of $\theta$ on $n$, i.e. $\theta=\lambda n$, affects the large $n$ asymptotic behaviour of $K_{n}$ in terms of both scaling and limiting behaviour. Specifically, while the almost-sure fluctuations exhibit different scalings depending on the value of  $\alpha$, and for $K_n$ as compared to $K_{r,n}$, the LLN \eqref{lln_d} exhibits the ``usual" linear scaling in $n$ for all $\alpha\in[0,1)$ and for $K_n^{\{n\}}$ as well as all the $K_{r,n}^{\{n\}}$. Moreover, in the almost-sure fluctuation results \eqref{llnk_DP}, \eqref{as_limit} and \eqref{as_ep_2} the limiting behaviour is non-random for $\alpha=0$ and random for $\alpha\in(0,1)$, which in turn determines the non-random and random centerings in \eqref{cltk_DP} and \eqref{gaussian_limit_kn} , respectively. In contrast, the LLN \eqref{lln_d} yields a non-random limit for all $\alpha\in[0,1)$,  leading to a non-random centering in the CLT \eqref{clt_d}.

The proof of Theorem \ref{main_d} relies on the sequential construction of the Ewens--Pitman model in  the regime $\theta = \lambda n$. The proof of the CLT \eqref{clt_d} follows by adapting a martingale argument used in \cite{BF24} to the setting of such construction. However, in our non-standard large-$\theta$ regime, it is necessary to make use of a more sophisticated central limit theorem for triangular arrays of martingales.

The explicit expressions for $\mathbf{\mathfrak{M}}_{d, \alpha,\lambda}$ and $\Sigma_{d, \alpha, \lambda}$ are as follows. As regard $\mathbf{\mathfrak{M}}_{d, \alpha,\lambda}$, it is defined as
\begin{equation} \label{mr_vec}
\mathfrak{M}_{d, \alpha, \lambda} = \left(\mathfrak{m}_{0; \alpha, \lambda}, \mathfrak{m}_{1; \alpha, \lambda}, ..., \mathfrak{m}_{d; \alpha, \lambda}\right)^T
\end{equation}
where
\begin{equation} \label{mr}
\mathfrak{m}_{r; \alpha, \lambda} = \begin{cases}\frac{\lambda}{\alpha} \left[ \left(\frac{\lambda +1}{\lambda}\right)^\alpha - 1 \right] &  \text{if } r = 0, \, \alpha \in (0, 1)\\[0.4cm]
\lambda \log \left(\frac{\lambda +1}{\lambda}\right) & \text{if } r=0, \, \alpha = 0\\[0.4cm]
\frac{ (1-\alpha )_{\uparrow r-1}}{r! } \,  \lambda^{1-\alpha} (1+\lambda)^{\alpha -r} & \text{if } r\ge 1, \, \alpha \in [0,1).
\end{cases}.
\end{equation}
As regard $\Sigma_{d, \alpha, \lambda}$, it is defined as 
\begin{equation}\label{Sigma_d}
\left(\Sigma_{d, \alpha, \lambda}\right)_{i,j} = \left(\frac{\lambda+1}{\lambda}\right)^{i+j-2-2\alpha}
\left(\Gamma_{d, \alpha, \lambda}\right)_{i,j}
\end{equation}
where $\Gamma_{d, \alpha, \lambda}$ is a symmetric matrix defined by
\begin{align}\label{Gamma_d} & \left(\Gamma_{d, \alpha, \lambda}\right)_{i,j} = \begin{cases}
s^2_{i-1; \alpha, \lambda} & i = j \\[0.4cm]
\frac{\lambda}{\alpha -1}  - \frac{(\lambda + 1)}{\alpha -1} \cdot  \left( \frac{\lambda}{\lambda+1} \right)^{\alpha}- \left(\frac{\lambda}{\lambda+1}\right)^2+ \frac{1-\alpha }{2\,  (\lambda+1)}\cdot H(1,2) & i = 1, j = 2\\[0.4cm]
\lambda^{2-j} \left[
- c_{j-2} \frac{(\lambda+1)^{-3}}{j-1} \cdot H (j-3; j-1) + c_{j-1} \frac{(\lambda+1)^{-2}}{j}  \cdot H(j-1; j)
\right] & i = 1, j \ge 3\\[0.4cm]
-  \lambda^{1+\alpha-2i} c_{i-1} \frac{(\lambda+1)^{i-1-\alpha}}{i}  \cdot H( i-\alpha; i) - \lambda^{2-2i} \cdot \varrho_{\alpha, \lambda}(i-2, i-1)
& i \ge 2, j = i+1\\[0.4cm]
- \lambda^{3-i-j}  \cdot \varrho_{\alpha, \lambda}(i-2, j-2)
& i \ge 2, j \ge i+2
%
\end{cases}
\end{align}
with $H (b, c)  = \, _2F_1 \left(1, b ; c \, ; -1\lambda\right)$
where $_2F_1(a, b; c; z)$ stands for the Gauss hypergeometric function defined, for all $z \in \mathbb{C}$ where $|z|<1$, by 
\begin{equation*} \label{Gausshyper}
_2F_1(a, b; c; z)=\, \sum_{r=0}^\infty \frac{(a)_{r \uparrow} (b)_{r \uparrow}} {(c)_{r \uparrow}}\frac{z^r}{r!}; 
\end{equation*}
see \cite[Equation \href{https://dlmf.nist.gov/15.2.E1} {(15.2.1)}]{nist}. Further, for $r,s \in \mathbb{N}$, 
$$
c_r = \frac{(1-\alpha)_{r \uparrow}}{r!} = \frac{r}{\alpha} \cdot p_\alpha(r)
$$
and 
\begin{align*}
\varrho_{\alpha, \lambda}(r,s) & = c_r c_s\,  \frac{(\lambda+1)^{-4}}{r+s+1} H(r+s+2, r+s+1) \\
& \quad - (c_{r+1} c_s + c_r c_{s+1})\,  \frac{(\lambda+1)^{-3}}{r+s+2} H(r+s+3, r+s+2)\\
& \quad +c_{r+1} c_{s+1}\,  \frac{(\lambda+1)^{-2}}{r+s+3} H(r+s+4, r+s+3).
\end{align*}
Finally, 
\begin{equation}\label{s0}
s^2_{0; \alpha, \lambda} = \begin{cases} \frac{\lambda}{\alpha} \cdot \left[ \frac{\lambda +1-\alpha}{\lambda +1} - \left(\frac{\lambda}{\lambda +1}\right)^\alpha\right]  & \text{if } \alpha \in (0,1) \\
\log\left(\frac{\lambda+1}{\lambda}\right)  - \frac{1}{\lambda +1}& \text{if } \alpha = 0, 
 \end{cases}
 \end{equation}
 and  for $i \ge 2$, 
\begin{align} \label{sr}
\nonumber
s_{i-1; \alpha, \lambda}^2 &= \lambda^{\alpha + 2 -2i} \left[
c_{i-2} \frac{(\lambda+1)^{i-3-\alpha}}{i-1} \cdot  H( i-\alpha-2; i-1)\right. \\
& \quad   \left. + c_{i-1} \frac{(\lambda+1)^{i-2-\alpha}}{i}  \cdot H( i-\alpha; i)
\right]  - \lambda^{3-2i} \cdot \varrho_{\alpha, \lambda}(i-2, i-2).
\end{align}


As corollaries of Theorem \ref{main_d}, we obtain results for the marginal distributions of $K_n^{\{n\}}$ and of the $K_{r,n}^{\{n\}}$. For what concerns $K_{n}^{\{n\}}$, we recover \cite[Theorem 1.2]{Con(25a)}, hence providing an alternative proof. 

\begin{cor}[Theorem 1.2 of \cite{Con(25a)}]
For $n\in\mathbb{N}$, let $K_{n}^{\{n\}}$ be the number of partition blocks under the Ewens-Pitman model with parameter $\alpha\in[0,1)$ and $\theta=\lambda n$, with $\lambda>0$. 
as $n\rightarrow+\infty$ there hold:
\begin{itemize}
\item[i)]
\begin{equation*}\label{mom_m}
\mathbb{E}\left[K_n^{\{n\}}\right] =n\, \mathfrak{m}_{0;\alpha, \lambda} + O(1)
\end{equation*}
\item[ii)]
\begin{equation}\label{lln_kn_old}
\frac{K_{n}^{\{n\}}}{n}\stackrel{a.s.}{\longrightarrow}\mathfrak{m}_{0;\alpha, \lambda};
\end{equation}
\item[iii)]
\begin{equation}\label{clt_kn_old}
\frac{K_{n}^{\{n\}}-n\mathfrak{m}_{0;\alpha, \lambda}}{\sqrt{n s_{0;\alpha, \lambda}^{2}}}\stackrel{\text{w}}{\longrightarrow}\mathcal{N}\left(0,\Big(\frac{\lambda}{\lambda+1}\Big)^{2\alpha}\right).
\end{equation}
\end{itemize}
\end{cor}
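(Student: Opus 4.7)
The plan is to deduce this corollary directly from Theorem \ref{main_d} by marginalization onto the first coordinate. Since Theorem \ref{main_d} requires $d \ge 1$, I would apply it with $d = 1$, obtaining the joint asymptotics for the vector $\mathbf{K}_{1, n}^{\{n\}} = (K_n^{\{n\}}, K_{1, n}^{\{n\}})^T$; the three claims about $K_n^{\{n\}}$ alone are then the first-coordinate projections.

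Specifically, statement (i) follows at once by reading off the first coordinate of \eqref{mom_m_d}, since by \eqref{mr_vec} the first entry of $\mathfrak{M}_{1,\alpha,\lambda}$ is $\mathfrak{m}_{0; \alpha, \lambda}$. The SLLN (ii) follows from the componentwise almost-sure convergence implied by the vector SLLN \eqref{lln_d}, which is preserved under coordinate projection.

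For the CLT (iii), I would apply the continuous mapping theorem to the multivariate weak convergence \eqref{clt_d}: projecting onto the first coordinate yields
\begin{equation*}
\frac{K_n^{\{n\}} - n \mathfrak{m}_{0; \alpha, \lambda}}{\sqrt{n}} \stackrel{w}{\longrightarrow} \mathcal{N}\bigl(0, (\Sigma_{1, \alpha, \lambda})_{1,1}\bigr).
\end{equation*}
It then remains only to identify this $(1,1)$-entry with the variance stated in \eqref{clt_kn_old}. By \eqref{Sigma_d} with $i = j = 1$ one has $(\Sigma_{1, \alpha, \lambda})_{1,1} = ((\lambda+1)/\lambda)^{-2\alpha} (\Gamma_{1, \alpha, \lambda})_{1,1}$, and by the diagonal case of \eqref{Gamma_d} we have $(\Gamma_{1, \alpha, \lambda})_{1,1} = s_{0; \alpha, \lambda}^2$. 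Hence the limiting variance is $(\lambda/(\lambda+1))^{2\alpha}\, s_{0; \alpha, \lambda}^2$, and dividing the centered quantity by $\sqrt{s_{0; \alpha, \lambda}^2}$ produces exactly the form \eqref{clt_kn_old} (with $s_{0;\alpha,\lambda}^2$ given by \eqref{s0} in both the $\alpha \in (0,1)$ and $\alpha = 0$ cases).

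Since the corollary is obtained as a pure marginalization of the joint result, no real obstacle arises; the only nontrivial check is the substitution into the formulas \eqref{Sigma_d}--\eqref{s0} to identify the limiting variance, which is an immediate computation.
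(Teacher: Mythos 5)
Your proposal is correct and follows essentially the same route the paper intends: the corollary is presented as an immediate marginal of Theorem \ref{main_d}, obtained by projecting onto the first coordinate. Your variance identification is also accurate, since for $i=j=1$ equation \eqref{Sigma_d} gives $(\Sigma_{1,\alpha,\lambda})_{1,1}=\bigl(\tfrac{\lambda}{\lambda+1}\bigr)^{2\alpha}s_{0;\alpha,\lambda}^{2}$, which after dividing by $\sqrt{n\,s_{0;\alpha,\lambda}^{2}}$ yields exactly the limit in \eqref{clt_kn_old}.
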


The result regarding the marginal distributions of the $K_{r,n}^{\{n\}}$ is new, and it completes the analysis begun in \cite[Proposition 1]{Ber25}, strenghtening the weak LLN to a strong LLN and establishing a CLT.
\begin{cor}
For $r \le n\in\mathbb{N}$, let $K_{r,n}^{\{n\}}$ denote the number of partition blocks of size $r$ under the Ewens-Pitman model with parameter $\alpha\in[0,1)$ and $\theta=\lambda n$, with $\lambda>0$. 
as $n\rightarrow+\infty$ there hold:
\begin{itemize}
\item[i)]
\begin{equation*}\label{mom_m_r}
\mathbb{E}\left[K_{r,n}^{\{n\}}\right] =n\, \mathfrak{m}_{r; \alpha, \lambda} + O(1)
\end{equation*}
\item[ii)]
\begin{equation}\label{lln_krn}
\frac{K_{r, n}^{\{n\}}}{n}\stackrel{a.s.}{\longrightarrow}\mathfrak{m}_{r; \alpha, \lambda};
\end{equation}
\item[iii)]
\begin{equation}\label{clt_krn}
\frac{K_{n}^{\{n\}}-n\mathfrak{m}_{r; \alpha, \lambda}}{\sqrt{n s_{r; \alpha, \lambda}^{2}}}\stackrel{\text{w}}{\longrightarrow}\mathcal{N}\left(0,\Big(\frac{\lambda}{\lambda+1}\Big)^{2\alpha-r}\right).
\end{equation}
\end{itemize}
where $ \mathfrak{m}_{r; \alpha, 	\lambda} $ is as in \eqref{mr}. 
\end{cor}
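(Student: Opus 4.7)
The plan is to deduce the corollary directly from Theorem \ref{main_d} by projecting the joint result onto the coordinate of interest. Fix $r \in \mathbb{N}$, pick any $d \ge r$, and observe that $K_{r,n}^{\{n\}}$ appears as the $(r+1)$-th coordinate of the random vector $\mathbf{K}_{d,n}^{\{n\}} = (K_n^{\{n\}}, K_{1,n}^{\{n\}}, \dots, K_{d,n}^{\{n\}})^T$ to which Theorem \ref{main_d} applies. All three assertions of the corollary will then follow by applying the continuous linear projection $\pi_{r+1}$ onto this coordinate.

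For part (i), expectation commutes with linear projection, so reading off the $(r+1)$-th component of \eqref{mom_m_d} immediately yields $\mathbb{E}[K_{r,n}^{\{n\}}] = n\,\mathfrak{m}_{r;\alpha,\lambda} + O(1)$, with $\mathfrak{m}_{r;\alpha,\lambda}$ as in \eqref{mr}. For part (ii), almost-sure convergence of a random vector in $\mathbb{R}^{d+1}$ entails coordinatewise almost-sure convergence, so \eqref{lln_d} projects immediately onto \eqref{lln_krn}. For part (iii), the continuous mapping theorem (equivalently, the Cram\'er--Wold device applied to the canonical direction $e_{r+1}$) transfers the weak convergence \eqref{clt_d} to the marginal, yielding
\begin{equation*}
\frac{K_{r,n}^{\{n\}} - n\,\mathfrak{m}_{r;\alpha,\lambda}}{\sqrt{n}} \stackrel{w}{\longrightarrow} \mathcal{N}\bigl(0, (\Sigma_{d,\alpha,\lambda})_{r+1,r+1}\bigr).
\end{equation*}

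To finish, I would substitute the explicit form of this diagonal entry provided by \eqref{Sigma_d} and \eqref{Gamma_d}, namely $(\Sigma_{d,\alpha,\lambda})_{r+1,r+1} = ((\lambda+1)/\lambda)^{2r-2\alpha}\, s^2_{r;\alpha,\lambda}$, and then divide through by $\sqrt{s^2_{r;\alpha,\lambda}}$ to recover the standardized form displayed in \eqref{clt_krn}. There is no genuine obstacle here: the corollary is pure bookkeeping on top of Theorem \ref{main_d}. The only point worth checking in passing is that the normalizers $s^2_{r;\alpha,\lambda}$ defined in \eqref{s0} and \eqref{sr} are strictly positive, so that the rescaling is meaningful; this follows by inspection using positivity of the coefficients $c_r$ and of the Gauss hypergeometric values $H(\cdot,\cdot)$ entering those expressions.
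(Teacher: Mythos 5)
Your route is exactly the paper's: the corollary is stated there as an immediate marginalization of Theorem \ref{main_d}, and projecting \eqref{mom_m_d}, \eqref{lln_d} and \eqref{clt_d} onto the $(r+1)$-th coordinate (Cram\'er--Wold / continuous mapping for part (iii)) is all that is involved, so parts (i) and (ii) are fine as you argue them.

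One point in part (iii) deserves a closer look, because your last step asserts more than your computation delivers. From \eqref{Sigma_d} and \eqref{Gamma_d} you correctly get
\begin{equation*}
\left(\Sigma_{d,\alpha,\lambda}\right)_{r+1,r+1}
= \left(\frac{\lambda+1}{\lambda}\right)^{2r-2\alpha} s^2_{r;\alpha,\lambda},
\end{equation*}
so after dividing by $\sqrt{n\,s^2_{r;\alpha,\lambda}}$ the limiting variance you obtain is $\bigl(\frac{\lambda}{\lambda+1}\bigr)^{2\alpha-2r}$, i.e.\ with exponent $2(\alpha-r)$, whereas the display \eqref{clt_krn} carries the exponent $2\alpha-r$. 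These agree only for $r=0$ (which is why the first corollary is consistent either way), so you cannot simply say that you ``recover the standardized form displayed'': either the exponent in \eqref{clt_krn} is a misprint or your identification of the diagonal entry needs revisiting; as written, your derivation supports $2(\alpha-r)$, and you should state that explicitly rather than assert agreement. A second, smaller caveat: strict positivity of $s^2_{r;\alpha,\lambda}$ for $r\ge 1$ is not ``by inspection from positivity of the $c_r$ and of $H(\cdot,\cdot)$'', since \eqref{sr} contains the subtracted term $\lambda^{3-2i}\varrho_{\alpha,\lambda}(i-2,i-2)$; the clean argument is that $s^2_{r;\alpha,\lambda}$ is a diagonal entry of the limit $\Gamma_{d,\alpha,\lambda}$ of the normalized increasing process, hence automatically nonnegative, and strict positivity requires a separate (short) check rather than sign-inspection.
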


\subsection{Related work}
There exists a rich literature on the large-$\theta$ asymptotic behaviour of the two-parameter Poisson-Dirichlet distribution, assuming $\alpha\in[0,1)$ and $\theta>0$. The genetic interpretation of $\theta$ has motivated the study of the large-$\theta$ asymptotic behaviour of the Poisson-Dirichlet distribution, as well as of statistics thereof, providing Gaussian fluctuations and large (and moderate) deviations \cite{Wat(77),Gri(79),Joy(02),Daw(06),Fen(07),Fen(08)}. As already mentioned, \cite{Fen(07)} first considered the regime $\theta=\lambda n$, with $\lambda>0$, in the study of the large $n$ asymptotic behaviour of the number $K_{n}^{\{n\}}$ of blocks in the Ewens model, providing a large deviation principle for $K_{n}^{\{n\}}$. Subsequently, \cite{Tsu(17)} derived both a LLN and a CLT for $K_{n}^{\{n\}}$ under the same regime. The large-$\theta$ asymptotic behaviour of $K_{r,n}^{\{n\}}$ has been explored in \cite{Ber25}, establishing a microclustering property, namely the size of the largest cluster grows sub-linearly with the sample size, while the number of clusters grows linearly, by means of a weak law of large numbers.  Some of the large-$\theta$ asymptotic results for the Poisson-Dirichlet distribution have been extended to two-parameter Poisson-Dirichlet distribution (i.e. $\alpha\in(0,1)$), in terms of both Gaussian fluctuations and large deviations \cite{Fen(07a),Fen(10)}. Further, the already mentioned \cite[Theorem 1.2]{Con(25a)} extended the results of \cite{Tsu(17)} to the general case $\alpha \in [0,1)$. The large-$\theta$ asymptotic behaviour of $K_{r,n}^{\{n\}}$ has been explored in \cite{Ber25} along with the case $\alpha = 0$, establishing a weak version of \eqref{lln_d}.


\subsection{Organization of the paper}
Section \ref{s2} contains the proof of Theorem \ref{main_d}, with technical results deferred to appendices \ref{app3_1}, \ref{sec_martingale} and \ref{app3_2}.

\section{Proof of Theorem \ref{main_d}}\label{s2}
To prove Theorem \ref{main_d}, we adapt the martingale argument first used in \cite{BF25} to the large-$\theta$ setting. 
Our argument relies heavily on the sequential construction of the Ewens--Pitman random partition under the $\theta = \lambda n$ hypothesis, which we introduce in subsection \ref{CRP_arrays_sec}. Following \cite{BF25},  throughout this section we use the convention 
$$K_{0, h}^{\{n\}} : = K_{h}^{\{n\}}. $$

\subsection{Sequential construction of the partition in the large-$\theta$ regime}\label{CRP_arrays_sec}

It is well-known that the (standard) Ewens--Pitman model admits a sequential construction in terms of the Chinese Restaurant Process (CRP) -- see Appendix \ref{app_pyp} for details. The model in the ``large-$\theta$'' setting also admits a sequential construction in terms of the CRP. However, this requires a careful specification of the objects at hand, to avoid confusion with the model in the classical setting. In particular, the sequential construction in this case is used to build finite arrays of partitions, defined as follows.\\

For $n \in \mathbb{N}$ and $h \in \{1, ..., n\}$, let 
$$\mathbf{K}_{h}^{\{n\}} = \left(K_{0, h}^{\{n\}}, K_{1, h}^{\{n\}}, ..., K_{h, h}^{\{n\}} \right)$$
be a variable encoding a random partition of $[h] = \{1, ..., h\}$, with $K_{0, h}^{\{n\}} : = K_{h}^{\{n\}}$ the number of blocks and for $r \ge 1$, $K_{r, h}^{\{n\}}$ the number of blocks of numerosity $r$.  Defining, for $r \in \{0, ..., n\}$,  
\begin{equation} \label{xi_r}
\xi_{r, h}^{\{n\}} = K_{r, h}^{\{n\}}  -K_{r, h-1}^{\{n\}}
\end{equation} 
and letting
$$
\mathcal{F}_{h-1}^{\{n\}} = \sigma\left(\mathbf{K}_1^{\{n\}}, ..., \mathbf{K}_{h-1}^{\{n\}}\right), 
$$
for $h \in \{2, ..., n\}$, the following sequential relation holds:
\begin{equation}\label{CRP_arr}
P \left[ \xi_{r, h}^{\{n\}}  =  k\, | \, \mathcal{F}_{h-1}^{\{n\}}\right] = \begin{cases}
p_{h-1}^{\{n\}} & \text{ if } k=1  \\
q_{h-1}^{\{n\}} & \text{ if } k=-1 \\
1- p_{h-1}^{\{n\}} - q_{h-1}^{\{n\}} & \text{ if } k=0
\end{cases}
\end{equation}
with 
\begin{equation}\label{p}
 p_{r, h-1}^{\{n\}} = \begin{cases}
\frac{\alpha K_{h-1}^{\{n\}} + \lambda n}{\lambda n + h-1} & \text{ if } r =0, 1\\
\frac{(r-1-\alpha) K_{r-1, h-1}^{\{n\}}}{\lambda n + h-1} & \text{ if } r \ge 2
\end{cases}
\end{equation}
and
\begin{equation}\label{q}
 q_{r, h-1}^{\{n\}} = \begin{cases}
0 & \text{ if } r =0\\
\frac{(r-\alpha) K_{r, h-1}^{\{n\}}}{\lambda n + h-1} & \text{ if } r \ge 1
\end{cases}
\end{equation}

Clearly, \eqref{CRP_arr} is recovered by substituting $\theta = \lambda n$ in the equivalent relation \eqref{CRP} of the standard CRP construction. It should be noted, however, that in this regime the recursive relation only holds for $h \le n$, and the array $\left(\mathbf{K}_1^{\{n\}}, ..., \mathbf{K}_{n}^{\{n\}}\right)$ is not further extendable to an exchangeable random partition of $[n+1]$. Furthermore, two partitions $\mathbf{K}_{i}^{\{n\}}$ and $\mathbf{K}_{j}^{\{m\}}$ with $m \neq n$ do not satisfy compatibility relations of any kind.  In other words, we are considering triangular arrays of the form
\begin{align*}
(\theta = \lambda) \quad& \textcolor{black}{\mathbf{K}_1^{{\{1\}}}} \\
(\theta = 2\lambda) \quad & \mathbf{K}_1^{\{2\}} \, \textcolor{black}{\mathbf{K}_2^{\{2\}}}\\
\vdots \quad \quad \quad &  \quad \quad \quad   \quad \quad     \ddots\\
(\theta = n\lambda) \quad & \mathbf{K}_1^{\{n\}} \,   \mathbf{K}_2^{\{n\}} \,  ... \,  \mathbf{K}_{n-1}^{\{n\}} \,  \textcolor{black}{\mathbf{K}_{n}^{\{n\}}}  \quad \quad \\
\vdots \quad \quad \quad &  \qquad  \qquad   \qquad   \qquad \qquad   \ddots
\end{align*}
%
%
and Theorem \ref{main_d} is concerned with the ``diagonal'' sequence $\left(\mathbf{K}_{d,n}^{\{n\}}\right)_{n}$ where for each $n$, $\mathbf{K}_{d,n}^{\{n\}}$ is the projection of $\mathbf{K}_{n}^{\{n\}}$ on the first $d+1$ coordinates.

\subsection{An important lemma}
This result is an essential preliminary to the proof of theorem \ref{main_d}. 
\begin{lem} \label{Kxrnn}
For $x \in [0, 1]$, define 
\begin{displaymath}
m_{0; \alpha, \lambda}(x) 
= \begin{cases}\frac{\lambda}{\alpha} \left[ \left(\frac{\lambda +x}{\lambda}\right)^\alpha - 1 \right] &  \text{if } \alpha \in (0, 1)\\
\lambda \log \left(\frac{\lambda +x}{\lambda}\right) & \text{if } \alpha = 0
\end{cases}
\end{displaymath}
and for $ r \ge 1$,
\begin{displaymath}
m_{r; \alpha, \lambda}(x)   = \frac{(1-\alpha )_{\uparrow r-1}}{r! } \,  x^r  \lambda^{1-\alpha} (x+\lambda)^{\alpha -r}
\end{displaymath}
Then, as $n \to +\infty$, for all $r \in \{0, ...,n\}$,
\begin{enumerate}[(i)] 
\item 
\begin{equation*}
\frac{\mathbb{E}\left[K_{r, \lfloor xn \rfloor }^{\{n\}} \right]}{n} \rightarrow m_{r; \alpha, \lambda}(x) 
\end{equation*}
uniformly on $[0,1]$.
\item the following SLLN holds uniformly for $x \in [0,1]$:
\begin{equation}\label{slln}
\frac{K_{r, \lfloor xn \rfloor} ^{\{n\}}}{n} \stackrel{a.s.}{\longrightarrow} m_{r; \alpha, \lambda}(x) 
\end{equation}
\end{enumerate}
\end{lem}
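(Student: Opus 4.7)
The plan is to prove (i) and (ii) simultaneously by induction on $r$, exploiting the sequential recursion \eqref{CRP_arr}. Taking expectations, the means $a_{r,h}^{\{n\}} := \mathbb{E}[K_{r,h}^{\{n\}}]$ satisfy, for $r=0$,
\begin{equation*}
a_{0,h}^{\{n\}} = a_{0,h-1}^{\{n\}}\Bigl(1+\tfrac{\alpha}{\lambda n + h-1}\Bigr) + \tfrac{\lambda n}{\lambda n + h-1},
\end{equation*}
and, for $r\ge 1$,
\begin{equation*}
a_{r,h}^{\{n\}} = a_{r,h-1}^{\{n\}}\Bigl(1-\tfrac{r-\alpha}{\lambda n + h-1}\Bigr) + \tfrac{(r-1-\alpha)\,a_{r-1,h-1}^{\{n\}} + \lambda n\,\mathds{1}_{\{r=1\}}}{\lambda n + h-1},
\end{equation*}
both with $a_{r,0}^{\{n\}}=0$. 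Iterating gives a closed-form weighted sum in which the multiplicative factor telescopes into a ratio of Gamma values; using $\Gamma(z+a)/\Gamma(z+b) \sim z^{a-b}$ I recognise $a_{r,\lfloor xn\rfloor}^{\{n\}}/n$ as a Riemann sum converging, uniformly in $x\in[0,1]$, to the unique solution of the linear ODE
\begin{equation*}
m_{r;\alpha,\lambda}'(x) = \frac{(r-1-\alpha)\,m_{r-1;\alpha,\lambda}(x) + \lambda\,\mathds{1}_{\{r\le 1\}} - (r-\alpha)\,m_{r;\alpha,\lambda}(x)}{\lambda + x},\qquad m_{r;\alpha,\lambda}(0)=0,
\end{equation*}
formally obtained as the $n\to\infty$ limit of the recursion with $x=h/n$. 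A direct verification shows that the $m_{r;\alpha,\lambda}$ defined in the statement is exactly this solution, proving (i); the $O(1)$ remainder in \eqref{mom_m_d} comes from a quantitative Riemann-sum error estimate.

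For (ii), set $V_{r,h}^{\{n\}} := K_{r,h}^{\{n\}} - a_{r,h}^{\{n\}}$ and apply the Doob decomposition $\xi_{r,k}^{\{n\}} = \mathbb{E}[\xi_{r,k}^{\{n\}}\mid \mathcal{F}_{k-1}^{\{n\}}] + \Delta M_{r,k}^{\{n\}}$, so that $(M_{r,h}^{\{n\}})_{h\le n}$ is an $\mathcal{F}_h^{\{n\}}$-martingale with increments bounded by $1$. Subtracting the deterministic from the stochastic recursion yields, for $r\ge 1$,
\begin{equation*}
V_{r,h}^{\{n\}} = V_{r,h-1}^{\{n\}}\Bigl(1 - \tfrac{r-\alpha}{\lambda n + h-1}\Bigr) + \tfrac{r-1-\alpha}{\lambda n + h-1}\,V_{r-1,h-1}^{\{n\}} + \Delta M_{r,h}^{\{n\}},
\end{equation*}
with an analogous scalar recurrence when $r=0$ (with $\alpha V_{0,h-1}^{\{n\}}$ in the drift and no $V_{r-1}$ term). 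Unfolding this first-order recurrence, and using that the telescoping products $\prod_{j\le h}\bigl(1-(r-\alpha)/(\lambda n + j-1)\bigr)$ are bounded above and below by positive constants uniformly in $h\le n$, I obtain
\begin{equation*}
\sup_{h\le n}\bigl|V_{r,h}^{\{n\}}\bigr| \le C\,\sup_{h\le n}\bigl|M_{r,h}^{\{n\}}\bigr| + C'\,\sup_{h\le n}\bigl|V_{r-1,h}^{\{n\}}\bigr|
\end{equation*}
with $C,C'$ depending only on $r,\alpha,\lambda$. The maximal form of Azuma--Hoeffding gives $\mathbb{P}\bigl(\sup_{h\le n}|M_{r,h}^{\{n\}}|\ge \varepsilon n\bigr) \le 2\exp(-\varepsilon^2 n/2)$, which is summable in $n$, so Borel--Cantelli yields $\sup_{h\le n}|M_{r,h}^{\{n\}}|/n \to 0$ almost surely. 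Combined with the inductive hypothesis $\sup_{h\le n}|V_{r-1,h}^{\{n\}}|/n \to 0$ a.s.\ (vacuous for $r=0$), this gives $\sup_{h\le n}|V_{r,h}^{\{n\}}|/n \to 0$ a.s., which together with the uniform convergence from (i) proves \eqref{slln}.

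The hard part is ensuring uniformity in $x\in[0,1]$ throughout: the Gamma-ratio asymptotics, the Riemann-sum approximation in (i), and the martingale tail bound in (ii) all must hold without deterioration in $h \in \{1,\ldots,n\}$. The uniform two-sided bounds on the telescoping Gamma products handle this on the deterministic side, while the maximal version of Azuma--Hoeffding handles it on the stochastic side; beyond these two ingredients the argument reduces to a clean induction on $r$, with the SLLN at level $r-1$ feeding directly into the drift perturbation at level $r$.
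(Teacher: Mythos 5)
Your route is genuinely different from the paper's and is viable in outline. The paper never touches the CRP recursion for this lemma: it takes the exact falling factorial moments $\mathbb{E}\bigl[(K_{r,\lfloor xn\rfloor}^{\{n\}})_{\downarrow s}\bigr]$ from \cite{Fav(13)} (for $r\ge1$) and \cite{Pit(06)} (for $r=0$), expands them uniformly in $x$ via the Tricomi--Erd\'elyi Gamma-ratio asymptotics, reads off (i) from the case $s=1$, and obtains (ii) by showing that the fourth central moment is $O(n^2)$ uniformly in $x$ (the $n^4$ and $n^3$ coefficients cancel), hence complete convergence and Borel--Cantelli. Your induction on $r$ --- a discrete mean recursion converging to a linear ODE solved by $m_{r;\alpha,\lambda}$ for (i), and a Doob decomposition plus maximal Azuma--Hoeffding plus Borel--Cantelli for (ii) --- avoids both the external moment formulas and the fourth-moment cancellation, at the price of carrying uniform-in-$x$ Euler/Riemann error estimates and the inductive hypothesis at level $r-1$ through every step; both arguments give control uniformly over $h\le n$, which is exactly what uniformity in $x$ means here, and both only need summable tail bounds for each $n$, so the triangular-array structure is not an obstacle.

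Two points need repair or detail before your argument is complete. First, your drift at $r=1$ has the wrong sign: by \eqref{p}, $p_{1,h-1}^{\{n\}}=(\alpha K_{h-1}^{\{n\}}+\lambda n)/(\lambda n+h-1)$, the same as for $r=0$, so the mean recursion at $r=1$ carries $+\alpha\,\mathbb{E}[K_{0,h-1}^{\{n\}}]+\lambda n$ in the numerator, not $(r-1-\alpha)\,\mathbb{E}[K_{0,h-1}^{\{n\}}]+\lambda n=-\alpha\,\mathbb{E}[K_{0,h-1}^{\{n\}}]+\lambda n$, and the limit ODE must read $m_1'(x)=\bigl(\alpha m_0(x)+\lambda-(1-\alpha)m_1(x)\bigr)/(\lambda+x)$. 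As you wrote it, the ``direct verification'' that $m_{1;\alpha,\lambda}$ solves your ODE fails for $\alpha\in(0,1)$ (the two right-hand sides differ by $2\alpha m_0(x)/(\lambda+x)>0$ for $x>0$); with the corrected drift the verification goes through, and the cases $r=0$ and $r\ge2$ are as you stated. Second, when you unfold the recursion for $V_{r,h}^{\{n\}}$, the martingale increments enter with weights $\prod_{j=k+1}^{h}\bigl(1-(r-\alpha)/(\lambda n+j-1)\bigr)$ depending on both $k$ and $h$, so boundedness of the weights alone does not yield $\sup_{h\le n}\bigl|\sum_{k\le h}(\cdots)\,\Delta M_{r,k}^{\{n\}}\bigr|\le C\sup_{h\le n}|M_{r,h}^{\{n\}}|$; you need either Abel summation (the weights are monotone in $k$ and uniformly bounded, so this does work) or an Azuma bound for each fixed $h$ followed by a union bound over $h\le n$, which is still summable in $n$. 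Finally, the martingale increments are bounded by $2$, not $1$, since $\xi_{r,h}^{\{n\}}\in\{-1,0,1\}$ and the conditional mean lies in $[-1,1]$; this only changes constants in the exponential bound.
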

\begin{proof}
The structure of the proof is the same for the three cases $r \ge 1$ ; $r = 0,\,  \alpha = 0$ and $r =0, \, \alpha \in (0,1)$, but it involves different calculations. For this reason, we present  here only the general idea; the complete calculations, differentiated for the three cases, are deferred to appendix \ref{app3_1}. \\

For all $r \in \{0, ..., n\}, \alpha \in [0,1)$ and  $ \lambda>0$, we exploit the explicit expressions for 
$$ \mathbb{E} \left[\left(K_{r, h} ^{\{n\}}\right)_{\downarrow s}\right]. $$
They are available for all $s \in \mathbb{N}$ in \cite[Proposition 1]{Fav(13)} for $r \ge 1$ and can be derived from \cite[Equation 3.11]{Pit(06)} for $r=0$. Upon substituting $h$ with $\lfloor xn \rfloor$, standard asymptotical results allow to prove by direct calculation that 
\begin{equation}\label{falling_expansion}
\lim_{n \to \infty}  \sup_{x \in [0,1]}\frac{1}{n^{s-2}} \cdot \left| \mathbb{E} \left[(K_{r, \lfloor xn \rfloor} ^{\{n\}})_{\downarrow s}\right] - \left(n^{s} \cdot \left(m_{r; \alpha, \lambda}(x)\right)^s +n^{s-1} \cdot \mathcal{S}_s(x) \right)\right|  = c
 \end{equation}
for some constant $c$ and some explicit function $\mathcal{S}_s(x)$ not depending on $n$. In particular, for $s = 1$, this rewrites as
$$
\lim_{n \to + \infty} \sup_{x \in [0,1]} \left| \frac{\mathbb{E} \left[K_{ r, \lfloor xn \rfloor }^{\{n\}}\right]}{n} -   m_{r;  \alpha, \lambda}(x)  \right| =0 
$$
Proving point (i).

To prove the strong law of large numbers \eqref{slln}, we show complete convergence of the sequence $K_{r, \lfloor xn \rfloor} ^{\{n\}} / n$ by showing that $\mathbb{E}\left[ \left(K_{r, \lfloor xn \rfloor} ^{\{n\}} - \mathbb{E}\left[K_{r, \lfloor xn \rfloor} ^{\{n\}}\right]\right)^4\right]   = O(n^2) $ uniformly for $x \in [0,1]$. Write
\begin{align*}
&\mathbb{E}\left[ \left(K_{r, \lfloor xn \rfloor} ^{\{n\}} - \mathbb{E}\left[K_{r, \lfloor xn\rfloor}^{\{n\}}\right]\right)^4\right]  \\
& \quad = \sum_{k  = 0}^4 (-1)^{4-k} \binom{4}{k} \left(\mathbb{E}\left[K_{r, \lfloor xn\rfloor}^{\{n\}}\right]\right)^{4-k} \sum_{s=0}^k S(k, s) \, \mathbb{E} \left[\left(K_{r, \lfloor xn \rfloor }^{\{n\}}\right)_{\downarrow s} \right] , 
\end{align*}
where $S(k, s)$ denote the Stirling number of the second kind. Expanding such expression and computing the expansion of powers of $\mathbb{E}\left[K_{r, \lfloor xn \rfloor }^{\{n\}} \right]$ yields, after tedious but straightforward calculations, 
$$
\lim_{n \to +\infty} \sup_{x \in [0,1]} \frac{1}{n^2} \, \left| \mathbb{E}\left[ \left(K_{r, \lfloor xn \rfloor} ^{\{n\}} - \mathbb{E}\left[K_{r, \lfloor xn\rfloor}^{\{n\}}\right]\right)^4\right] - \left( n^4 \cdot \mathcal{A}_{r; \alpha, \lambda}(x) + n^3 \cdot \mathcal{B}_{r; \alpha, \lambda}(x) \right) \right| = c
$$
for some constant $c$,  where
$$
\mathcal{A}_{r; \alpha, \lambda}(x)  =\left( m_{r; \alpha, \lambda}(x)  \right)^4  \cdot \left(1-4+6-3 \right)  = 0 
$$
\begin{align*}
\mathcal{B}_{r; \alpha, \lambda}(x)  &=m^3_{r; \alpha, \lambda}(x) \left[(-12+12-4) \mathcal{S}_1(x)  + (6-12+6) \right]  + 6\, m^2_{r; \alpha, \lambda}(x) \, \mathcal{S}_2(x) \\
& \quad + m_{r; \alpha, \lambda}(x)\, \mathcal{S}_3(x) + \mathcal{S}_4(x)\\
&=-4 \, m^3_{r; \alpha, \lambda}(x) \, \mathcal{S}_1(x)    + 6\, m^2_{r; \alpha, \lambda}(x) \, \mathcal{S}_2(x)  -4 m_{r; \alpha, \lambda}(x)\, \mathcal{S}_3(x) + \mathcal{S}_4(x).
\end{align*}
Further, it can be proven by substituting the explicit expressions for $S_s(x)$ that
\begin{equation}\label{Bis0}
 \mathcal{B}_{r; \alpha, \lambda}(x) =0 
 \end{equation}
for all $r \in \{0, ..., n\}$ and $\alpha \in [0,1)$;  see appendix \ref{app3_1} for details. 
Therefore, 
$$\lim_{n \to +\infty} \sup_{x \in [0,1]} \frac{1}{n^2} \,  \mathbb{E}\left[ \left(K_{r, \lfloor xn \rfloor} ^{\{n\}} - \mathbb{E}\left[K_{r, \lfloor xn \rfloor} ^{\{n\}}\right]\right)^4\right]   = c $$ 
and the proof is concluded.
\end{proof}

\subsection{Proof of the expansion \eqref{mom_m_d} and of the SLLN \eqref{lln_d}}
Upon noting that, for all $r \ge 0$, 
\begin{displaymath}
\mathfrak{m}_{r; \alpha, \lambda} = m_{r; \alpha, \lambda}(1)  
\end{displaymath}
apply Lemma \ref{Kxrnn} for $x = 1$. 

\subsection{Proof of the CLT \eqref{clt_d}}
The proof of  \eqref{clt_d} relies on the construction of a martingale and on the use of the Lindeberg--Levy central limit theorem for arrays of martingales given in \cite[Corollary 3.1]{HH}.

\subsubsection{Construction of the martingale}
The sequential construction of the partition under the large-$\theta$ regime of Section \ref{CRP_arrays_sec} entails that for all $h \in \{1,\ldots,n\}$ and for all $r \geq 0$
\begin{align}\label{xi_mean_r}
\nonumber \mathbb{E}\left[K_{r,h}^{\{n\}} \, | \, \mathcal{F}_{h}^{\{n\}}\right] & = K_{r, h-1}^{\{n\}} + p_{r, h-1}^{\{n\}} - q_{r, h-1}^{\{n\}}\\
&= \gamma_{r, h-1}^{\{n\}}  K_{r,h-1}^{\{n\}}  + \beta_{r, h-1}^{\{n\}}
\end{align}
where 
$$
\gamma_{r, h-1}^{\{n\}} = \frac{\lambda n + h-1-r+\alpha}{\lambda n + h-1}
$$
and
$$
\beta_{r, h-1}^{\{n\}}  = \begin{cases} \frac{\lambda n}{\lambda n +h-1} & \text{ if } r = 0\\
  p_{r, h-1}^{\{n\}}&\text{ if } r \ge 1
\end{cases}.
$$
For $r \in \{0, ..., n\}$  and for $n \in \mathbb{N}$ and $h \in \{1, ..., n\}$, define 
\begin{equation}\label{an_r}
a_{r,h}^{\{n\}} = \prod_{k=1}^{h-1} \left(\gamma_{r,k}^{\{n\}}\right)^{-1}
\end{equation}
and
\begin{equation}\label{A_n}
A_{r,h}^{\{n\}} = \sum_{k=1}^{h-1} a_{r, k+1}^{\{n\}} \beta_{r,k}^{\{n\}}.
\end{equation}
Further, for fixed  $d \in \mathbf{N}$,  let 
\begin{align*}
\mathbf{\Xi}_{d, h}^{\{n\}} & = \left(\xi_{0,h}^{\{n\}} , \xi_{1, h}^{\{n\}}, ..., \xi_{d, h}^{\{n\}}\right)^T\\
\mathbf{A}_{d, h}^{\{n\}} & = \left(A_{0,h}^{\{n\}} , A_{1, h}^{\{n\}}, ..., A_{d, h}^{\{n\}}\right)^T\\ 
\mathbf{a}_{d, h}^{\{n\}}  &= \left(a_{0,h}^{\{n\}} , a_{1, h}^{\{n\}}, ..., a_{d, h}^{\{n\}}\right)^T. 
\end{align*}
We introduce the following notation: given a vector $v \in \mathbb{R}^{d+1}$, we denote by $\operatorname{Diag}(v)$ the  $(d+1) \times (d+1)$ diagonal matrix defined by
\begin{align*}
\left(\operatorname{Diag}(v) \right)_{i, j}  &= \begin{cases}
0 & \text{ if } i \neq j\\
v_i & \text{ if } i = j
\end{cases}
\end{align*}
With this convention, let 
$$
\mathcal{A}_{d, h}^{\{n\}} = \operatorname{Diag}\left(\mathbf{a}_{d, h}^{\{n\}}\right)
$$
and finally define 
\begin{equation}\label{bigM}
\mathbf{M}_{d, h}^{\{n\}}  = \mathcal{A}_{d, h}^{\{n\}} \cdot \mathbf{K}_{d,h}^{\{n\}}  - \mathbf{A}_{d, h}^{\{n\}} 
\end{equation}
and
\begin{equation}\label{D}
\mathbf{\Delta}_{d, h}^{\{n\}} = \mathbf{M}_{d, h}^{\{n\}} - \mathbf{M}_{d, h-1 }^{\{n\}}
\end{equation}
\begin{lem}\label{mart_lem}
For every $d \in \mathbb{N}$ and $n \in \mathbb{N}$, $\left(\mathbf{M}_{d, h}^{\{n\}}\right)_{1\leq h \leq n}$ is a martingale with respect to the filtration $\left(\mathcal{F}_h^{\{n\}}\right)_{1\leq h \leq n}$.
\end{lem}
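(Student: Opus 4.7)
The plan is to verify the martingale property componentwise. Since $\mathcal{A}_{d,h}^{\{n\}}$ is diagonal, it suffices to show that for each $r \in \{0,1,\ldots,d\}$ the scalar process $M_{r,h}^{\{n\}} := a_{r,h}^{\{n\}} K_{r,h}^{\{n\}} - A_{r,h}^{\{n\}}$ satisfies $\mathbb{E}\bigl[M_{r,h}^{\{n\}} \mid \mathcal{F}_{h-1}^{\{n\}}\bigr] = M_{r,h-1}^{\{n\}}$ for $h \ge 2$. Integrability and adaptedness are automatic: $K_{r,h}^{\{n\}}$ is bounded by $h \le n$; the weights $a_{r,h}^{\{n\}}$ defined in \eqref{an_r} are deterministic functions of $(n,h,r,\alpha,\lambda)$ since the $\gamma_{r,k}^{\{n\}}$ are; and $A_{r,h}^{\{n\}}$ defined in \eqref{A_n} is $\mathcal{F}_{h-1}^{\{n\}}$-measurable, because each summand $\beta_{r,k}^{\{n\}}$ is $\mathcal{F}_{k}^{\{n\}}$-measurable (deterministic in the case $r=0$, and a linear function of $K_{r-1,k}^{\{n\}}$ or $K_k^{\{n\}}$ for $r\ge 1$ by \eqref{p}).

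The computation then rests on two compensating identities built into the definitions \eqref{an_r}--\eqref{A_n}, both of which I would record as a short preliminary step:
\begin{equation*}
a_{r,h}^{\{n\}} \, \gamma_{r,h-1}^{\{n\}} \;=\; a_{r,h-1}^{\{n\}},
\qquad
A_{r,h}^{\{n\}} \;=\; A_{r,h-1}^{\{n\}} + a_{r,h}^{\{n\}} \, \beta_{r,h-1}^{\{n\}}.
\end{equation*}
The first is an immediate telescoping of the product defining $a_{r,h}^{\{n\}}$, and the second is the observation that $A_{r,h}^{\{n\}}$ differs from $A_{r,h-1}^{\{n\}}$ by the single new summand $k=h-1$. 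With these in hand, starting from the drift relation \eqref{xi_mean_r} I would write
\begin{align*}
\mathbb{E}\bigl[M_{r,h}^{\{n\}} \,\big|\, \mathcal{F}_{h-1}^{\{n\}}\bigr]
&= a_{r,h}^{\{n\}} \, \mathbb{E}\bigl[K_{r,h}^{\{n\}} \,\big|\, \mathcal{F}_{h-1}^{\{n\}}\bigr] - A_{r,h}^{\{n\}} \\
&= a_{r,h}^{\{n\}} \bigl(\gamma_{r,h-1}^{\{n\}} K_{r,h-1}^{\{n\}} + \beta_{r,h-1}^{\{n\}}\bigr) - A_{r,h}^{\{n\}},
\end{align*}
and substitute the two identities. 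The telescoping product yields $a_{r,h-1}^{\{n\}} K_{r,h-1}^{\{n\}}$, while the additive identity cancels the $a_{r,h}^{\{n\}} \beta_{r,h-1}^{\{n\}}$ term against the newest contribution to $A_{r,h}^{\{n\}}$, leaving exactly $a_{r,h-1}^{\{n\}} K_{r,h-1}^{\{n\}} - A_{r,h-1}^{\{n\}} = M_{r,h-1}^{\{n\}}$. Assembling the $d+1$ coordinates yields the vector martingale property.

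No substantive obstacle is expected: the weights $a_{r,h}^{\{n\}}$ and the cumulative term $A_{r,h}^{\{n\}}$ are precisely the Doob-type compensator of the affine conditional drift \eqref{xi_mean_r}, so the proof reduces to a one-line verification. The only minor care needed is in handling $\beta_{r,h-1}^{\{n\}}$ for $r\ge 1$, which is random but $\mathcal{F}_{h-1}^{\{n\}}$-measurable, so it passes outside the conditional expectation without creating any residual term.
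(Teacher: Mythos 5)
Your proposal is correct and follows essentially the same route as the paper: it rests on the same two recursions $a_{r,h}^{\{n\}}\gamma_{r,h-1}^{\{n\}}=a_{r,h-1}^{\{n\}}$ and $A_{r,h}^{\{n\}}=A_{r,h-1}^{\{n\}}+a_{r,h}^{\{n\}}\beta_{r,h-1}^{\{n\}}$ together with the drift relation \eqref{xi_mean_r}, the only difference being that you verify $\mathbb{E}\bigl[M_{r,h}^{\{n\}}\mid\mathcal{F}_{h-1}^{\{n\}}\bigr]=M_{r,h-1}^{\{n\}}$ coordinatewise while the paper packages the same cancellation in vector form via the increment identity \eqref{deltaform_d}. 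Your explicit checks of adaptedness, integrability and the $\mathcal{F}_{h-1}^{\{n\}}$-measurability of $A_{r,h}^{\{n\}}$ and $\beta_{r,h-1}^{\{n\}}$ are fine and merely make explicit what the paper leaves implicit.
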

The proof of this lemma is immediate and relies on the identity 
\begin{equation}\label{deltaform_d}
\mathbf{\Delta}_{d, h}^{\{n\}} = \mathcal{A}_{d, h}^{\{n\}}  \cdot  \left[ \mathbf{\Xi}_{d, h}^{\{n\} }- \left(p_{r, h-1}^{\{n\}} - q_{r, h-1}^{\{n\}}\right)_{r \in \{0, ..., d\}} \right].
\end{equation}
See appendix \ref{sec_martingale} for details. 
\subsubsection{Completing the proof }
Let $n$ be a sequence of integers such that $h_n \to + \infty$ as $n \to +\infty$. Denote by
$$
\langle \mathbf{M}_d^{\{n\}} \rangle = \left(\langle \mathbf{M}_d^{\{n\}} \rangle_1, ..., \langle \mathbf{M}_d^{\{n\}} \rangle_n  \right)^T
$$ 
the increasing process of $ \mathbf{M}_d^{\{n\}}$, i.e.
\begin{equation}\label{increasingproc_d}
\langle  \mathbf{M}_d^{\{n\}} \rangle_{h_n} =
\sum_{h = 1}^{h_n} \mathbb{E} \left[ \mathbf{\Delta}_{d,h}^{\{n\}} \cdot \left(\mathbf{\Delta}_{d,h}^{\{n\}}\right)^T \, \bigg| \, \mathcal{F}_{h-1}^{\{n\}}\right].
\end{equation}
The following lemma is an immediate consequence of \cite[Corollary 3.1]{HH}: 
%
%
\begin{lem} \label{increasingproc_lem_d} Assume that the following two conditions are satisfied,
\begin{enumerate}[(a)]
\item $$ \frac{ \langle \mathbf{M}_d^{\{n\}} \rangle_{h_n}}{h_{n}} \stackrel{a.s.}{\longrightarrow }G_{d, \alpha, \lambda}$$
 for some positive semi-definite $(d+1) \times (d+1)$ matrix $G_{d, \alpha, \lambda}$
\item For every $\varepsilon > 0$,
$$\frac{1}{h_n} \, \sum_{h = 1}^{h_n} \mathbb{E} \left[ \left\lVert \mathbf{\Delta}_{d,h}^{\{n\}}\right\rVert^2 \cdot \mathds{1}_{\left\{ \left\lVert \mathbf{\Delta}_{d,h}^{\{n\}}\right\rVert^2 >h_n \, \varepsilon \right\}} \, \bigg| \, \mathcal{F}_{h-1}^{\{n\}}\right] \stackrel{a.s.}{\longrightarrow }0 $$
\end{enumerate}
Then, we have
\begin{equation}
\label{CLTMG1}
\frac{\mathbf{M}_{d, h_n}^{\{n\}}}{\sqrt{h_n}}\stackrel{w}{\longrightarrow} N(\mathbf{0}, G_{d, \alpha, \lambda})
\end{equation}
\end{lem}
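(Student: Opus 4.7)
The plan is to invoke the scalar version of \cite[Corollary 3.1]{HH} for triangular arrays of martingale differences and then lift the one-dimensional conclusion to $\mathbb{R}^{d+1}$ via the Cram\'er--Wold device. By Lemma \ref{mart_lem}, for each $n$ the sequence $\bigl(\mathbf{M}_{d,h}^{\{n\}}\bigr)_{1\le h\le h_n}$ is a martingale with respect to the filtration $\bigl(\mathcal{F}_h^{\{n\}}\bigr)_{1\le h\le h_n}$; in particular, the rescaled differences
\begin{equation*}
X_{n,h} := \frac{u^T \mathbf{\Delta}_{d,h}^{\{n\}}}{\sqrt{h_n}}, \qquad 1\le h\le h_n,
\end{equation*}
form a square-integrable martingale difference triangular array for every fixed $u\in\mathbb{R}^{d+1}$.

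The next step is to check the two hypotheses of the scalar martingale CLT for this array. For the conditional variance, one has
\begin{equation*}
\sum_{h=1}^{h_n}\mathbb{E}\bigl[X_{n,h}^2 \,|\, \mathcal{F}_{h-1}^{\{n\}}\bigr]
= \frac{u^T \langle \mathbf{M}_d^{\{n\}}\rangle_{h_n} u}{h_n}
\stackrel{a.s.}{\longrightarrow} u^T G_{d,\alpha,\lambda} u
\end{equation*}
by assumption (a). For the Lindeberg condition, the Cauchy--Schwarz bound $|u^T \mathbf{\Delta}_{d,h}^{\{n\}}|^2 \le \|u\|^2 \,\|\mathbf{\Delta}_{d,h}^{\{n\}}\|^2$ together with assumption (b) yields, for every $\varepsilon>0$,
\begin{equation*}
\sum_{h=1}^{h_n} \mathbb{E}\!\left[X_{n,h}^2 \,\mathds{1}_{\{|X_{n,h}|>\varepsilon\}} \,\bigg|\, \mathcal{F}_{h-1}^{\{n\}}\right]
\le \frac{\|u\|^2}{h_n}\sum_{h=1}^{h_n} \mathbb{E}\!\left[ \|\mathbf{\Delta}_{d,h}^{\{n\}}\|^2 \,\mathds{1}_{\{\|\mathbf{\Delta}_{d,h}^{\{n\}}\|^2 > h_n \varepsilon^2/\|u\|^2\}}\,\bigg|\, \mathcal{F}_{h-1}^{\{n\}}\right]
\stackrel{a.s.}{\longrightarrow} 0.
\end{equation*}

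Applying \cite[Corollary 3.1]{HH} to the array $(X_{n,h})$ then gives
\begin{equation*}
\frac{u^T\mathbf{M}_{d,h_n}^{\{n\}}}{\sqrt{h_n}} = \sum_{h=1}^{h_n} X_{n,h} \stackrel{w}{\longrightarrow} \mathcal{N}\bigl(0,\, u^T G_{d,\alpha,\lambda}\, u\bigr)
\end{equation*}
for every $u\in\mathbb{R}^{d+1}$. Since $G_{d,\alpha,\lambda}$ is positive semi-definite, this is exactly the family of one-dimensional projections of the centered Gaussian $\mathcal{N}(\mathbf{0},G_{d,\alpha,\lambda})$, so the Cram\'er--Wold theorem delivers the joint convergence \eqref{CLTMG1}. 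The only delicate point is the verification that \cite[Corollary 3.1]{HH}, originally stated for nested martingale arrays, applies in our setting where both the filtration $\mathcal{F}_h^{\{n\}}$ and the increments $\mathbf{\Delta}_{d,h}^{\{n\}}$ depend on $n$; this is standard since nestedness is not required once conditions (a) and (b) are phrased, as above, in terms of conditional moments relative to $\mathcal{F}_{h-1}^{\{n\}}$.
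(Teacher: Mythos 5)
Your proposal is correct and follows essentially the same route as the paper, which gives no argument beyond declaring the lemma an immediate consequence of \cite[Corollary 3.1]{HH}; your Cram\'er--Wold reduction together with the verification of the conditional-variance and conditional Lindeberg conditions is precisely the detail the paper suppresses. One small correction: the reason the nested-$\sigma$-field hypothesis of Hall--Heyde can be dropped is not that conditions (a)--(b) are phrased via conditional moments with respect to $\mathcal{F}_{h-1}^{\{n\}}$, but that the limit $u^T G_{d,\alpha,\lambda}\, u$ is deterministic, in which case the nesting assumption is known to be unnecessary (it is only needed to handle a random limiting variance).
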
 
We now show that, for the sequence $h_n = n$,  conditions (a) and (b) of lemma \ref{increasingproc_lem_d} are satisfied with $G_{d, \alpha, \lambda} = \Gamma_{d, \alpha, \lambda}$ where $\Gamma_{d, \alpha, \lambda}$ is as in \eqref{Gamma_d}. 
To prove (a), we work component by component. It follows from \eqref{deltaform_d} that
\begin{align*}
&\left(\mathbf{\Delta}_{d,h}^{\{n\}} \cdot \left(\mathbf{\Delta}_{d,h}^{\{n\}}\right)^T\right)_{i, j} \\
& \quad = a_{i-1, h}^{\{n\}} a_{j-1,h}^{\{n\}} \cdot \left(\xi_{i-1, h}^{\{n\}} -p_{i-1, h-1}^{\{n\}} + q_{i-1, h-1}^{\{n\}}\right) \left(\xi_{j-1, h}^{\{n\}} -p_{j-1, h-1}^{\{n\}} + q_{j-1, h-1}^{\{n\}}\right)\\
& \quad = a_{i-1, h}^{\{n\}} a_{j-1,h}^{\{n\}} \cdot \left[\xi_{i-1, h}^{\{n\}}\xi_{j-1, h}^{\{n\}}  -\left(p_{i-1, h-1}^{\{n\}} - q_{i-1, h-1}^{\{n\}} \right)\xi_{j-1, h}^{\{n\}}  \right.\\
& \quad \quad \left. - \left(p_{j-1, h-1}^{\{n\}} - q_{j-1, h-1}^{\{n\}} \right) \xi_{i-1, h}^{\{n\}} + \left(p_{i-1, h-1}^{\{n\}} - q_{i-1, h-1}^{\{n\}} \right) \left(p_{j-1, h-1}^{\{n\}} - q_{j-1, h-1}^{\{n\}} \right)\right]
\end{align*}
so that
\begin{displaymath}
 \left(\mathbb{E} \left[ \mathbf{\Delta}_{d,h}^{\{n\}} \cdot \left(\mathbf{\Delta}_{d,h}^{\{n\}}\right)^T \, \bigg| \, \mathcal{F}_{h-1}^{\{n\}}\right]\right)_{i,j} = a_{i-1, h}^{\{n\}} a_{j-1,h}^{\{n\}} \cdot \left[ \left(P_{h-1}^{\{n\}}\right)_{i, j}  - \left(R_{h-1}^{\{n\}}\right)_{i, j} \right]
 \end{displaymath}
where, for all $1\leq i \leq j$, 
$$
\left(P_{h-1}^{\{n\}}\right)_{i, j} = \mathbb{E} \left[\xi_{i-1, h}^{\{n\}}\xi_{j-1, h}^{\{n\}}  \, \bigg| \, \mathcal{F}_{h-1}^{\{n\}} \right] = \begin{cases}
0 & \text{ if } j \ge i+2\\
-q_{i-1, h-1}^{\{n\}} & \text{ if }j = i + 1 \text{ and } i \ge 2\\
p_{i-1, h-1}^{\{n\}} + q_{i-1, h-1}^{\{n\}} & \text{ if } j = i \text { or } i = 1, j=2
\end{cases}
$$
and
$$
\left(R_{h-1}^{\{n\}}\right)_{i, j} = \left(p_{i-1, h-1}^{\{n\}} - q_{i-1, h-1}^{\{n\}} \right) \left(p_{j-1, h-1}^{\{n\}} - q_{j-1, h-1}^{\{n\}} \right).
$$ 
This implies
\begin{align*}\label{incp}
\nonumber \left(\langle  \mathbf{M}_d^{\{n\}} \rangle_{n} \right)_{i,j} &= \sum_{h = 1}^{n} a_{i-1, h}^{\{n\}} a_{j-1,h}^{\{n\}} \cdot \left[ \left(P_{h-1}^{\{n\}}\right)_{i, j}  - \left(R_{h-1}^{\{n\}}\right)_{i, j} \right]\\
& \nonumber = \sum_{h = 1}^{n} a_{i-1, n \cdot \frac{h-1}{n} + 1}^{\{n\}} a_{j-1,n \cdot \frac{h-1}{n} + 1}^{\{n\}} \cdot \left[ \left(P_{n \cdot \frac{h-1}{n}}^{\{n\}}\right)_{i, j}  - \left(R_{n \cdot \frac{h-1}{n}}^{\{n\}}\right)_{i, j} \right]\\
& =n \cdot  \sum_{h = 1}^n a_{i-1, n \cdot \frac{h-1}{n} + 1}^{\{n\}} a_{j-1,n \cdot \frac{h-1}{n} + 1}^{\{n\}} \cdot \left[ \left(P_{n \cdot \frac{h-1}{n}}^{\{n\}}\right)_{i, j}  - \left(R_{n \cdot \frac{h-1}{n}}^{\{n\}}\right)_{i, j} \right] \cdot \frac{1}{n}
\end{align*}
so that $ \left(\langle  \mathbf{M}_d^{\{n\}} \rangle_{n} \right)_{i,j}  / n$ can be viewed as a Riemann sum for the function $F^{\{n\}}_{i, j}$ on $[0,1]$, 
where
\begin{equation}\label{Fnij}
F^{\{n\}}_{i, j}(x)  = a_{i-1, \lfloor n x\rfloor  + 1}^{\{n\}} a_{j-1, \lfloor n x\rfloor  + 1}^{\{n\}} \cdot \left[ \left(P_{ \lfloor n x\rfloor }^{\{n\}}\right)_{i, j}  - \left(R_{ \lfloor n x\rfloor }^{\{n\}}\right)_{i, j} \right]
\end{equation}
Making use of lemma \ref{Kxrnn} 
we are able to show that
$$F^{\{n\}}_{i,j} (x) \stackrel{a.s.}{\longrightarrow} f_{i,j} (x) $$
uniformly for $x \in [0,1]$, where $f_{i,j} (x) $ is a deterministic, explicit function only depending on $\alpha, \lambda, i$ and $j$, and it is Riemann integrable on [0,1]. By definition and elementary properties of the Riemann integral, 
\begin{displaymath}
 \frac{\left(\langle  \mathbf{M}_d^{\{n\}} \rangle_{n} \right)_{i,j}}{n}  \stackrel{a.s.}{\longrightarrow}  \int_{0}^1 f_{i, j}(x) \, \mathrm{d}x.
\end{displaymath}
and since we are able to prove
\begin{align*}
 \int_{0}^1 f_{i, j}(x) \, \mathrm{d}x =  \left(\Gamma_{d, \alpha,\lambda}\right)_{i,j},
\end{align*}
the proof is concluded.  See appendix \ref{app3_2} for the proof of the convergence of $F_{i,j}^{\{n\}}$, the definition of $f_{i,j}$ and the computation of the integral.\\

For what concerns condition (b), we can write
\begin{align*}
S_{n}& :=  \frac{1}{n}   \, \sum_{h = 1}^{n} \mathbb{E} \left[ \left\lVert \mathbf{\Delta}_{d,h}^{\{n\}}\right\rVert^2 \cdot \mathds{1}_{\left\{ \left\lVert \mathbf{\Delta}_{d,h}^{\{n\}}\right\rVert^2 >n  \varepsilon \right\}} \, \bigg| \, \mathcal{F}_{h-1}^{\{n\}}\right] \\
 & =  \frac{1}{n} \sum_{h = 1}^{n} \mathbb{E} \left[ \frac{\left\lVert \mathbf{\Delta}_{d,h}^{\{n\}}\right\rVert^4}{\left\lVert \mathbf{\Delta}_{d,h}^{\{n\}}\right\rVert^2} \cdot \mathds{1}_{\left\{ \left\lVert \mathbf{\Delta}_{d,h}^{\{n\}}\right\rVert^2 >n  \varepsilon \right\}} \, \bigg| \, \mathcal{F}_{h-1}^{\{n\}}\right] 
 \end{align*}
 and since 
 $$ 
 \frac{\mathds{1}_{\left\{ \left\lVert \mathbf{\Delta}_{d,h}^{\{n\}}\right\rVert^2 >n   \varepsilon \right\}} }{\left\lVert \mathbf{\Delta}_{d,h}^{\{n\}}\right\rVert^2}  \le \frac{1}{n\varepsilon}
 $$
 almost surely, then
 \begin{align*}
S_{n}&  \le \frac{1}{n^2  \, \varepsilon} \, \sum_{h = 1}^{n} \mathbb{E} \left[ \left\lVert \mathbf{\Delta}_{d,h}^{\{n\}}\right\rVert^4 \, \bigg| \, \mathcal{F}_{h-1}^{\{n\}}\right]\\
&  = \frac{1}{n^2  \, \varepsilon} \, \sum_{h = 1}^{n}   \mathbb{E} \left[ \left\lVert \mathcal{A}_{d, h}^{\{n\}}  \cdot  \left[ \mathbf{\Xi}_{d, h}^{\{n\} }- \mathbb{E} \left[\mathbf{\Xi}_{d, h}^{\{n\} } \, | \, \mathcal{F}_{h-1}^{\{n\}} \right]\right]\right\rVert^4 \, \bigg| \, \mathcal{F}_{h-1}^{\{n\}}\right]\\
&\le \frac{\mathcal{C}}{n^2  \, \varepsilon} \, \sum_{h = 1}^{n}   \mathbb{E} \left[ \left\lVert \mathcal{A}_{d, h}^{\{n\}}  \right\rVert^4  \cdot \left\lVert \mathbf{\Xi}_{d, h}^{\{n\} }- \mathbb{E} \left[\mathbf{\Xi}_{d, h}^{\{n\} } \, | \, \mathcal{F}_{h-1}^{\{n\}} \right] \right\rVert^4 \, \bigg| \, \mathcal{F}_{h-1}^{\{n\}}\right]
\end{align*}
for some positive constant $\mathcal{C}$. Finally, observe that, since the $\xi_{r,h}^{\{n\}}$ take values in $\{-1, 0, 1\}$, 
$$
\left \lVert\ \mathbf{\Xi}_{d, h}^{\{n\} }  - \mathbb{E} \left[\mathbf{\Xi}_{d, h}^{\{n\} } \, | \, \mathcal{F}_{h-1}^{\{n\}} \right] \right\rVert \le 2
$$
almost surely; furthermore, by definition of $\mathcal{A}_{d,h}^{\{n\}}$, 
$$
 \left\lVert \mathcal{A}_{d, h}^{\{n\}}  \right\rVert  =  \left\lVert \mathbf{a}_{d, h}^{\{n\}}  \right\rVert.
$$
Then, we can bound $S_{n}$ by 
\begin{align*}
S_{n}&\le \frac{16\,  \mathcal{C}}{n^2   \varepsilon} \, \sum_{h = 1}^{n}   \left\lVert \mathbf{a}_{d, h}^{\{n\}}  \right\rVert^4  \\
&\le \frac{16\,  \mathcal{C}}{n  \varepsilon} \cdot \max_{r \in \{0, ..., d\}, \, x \in [0,1]}  \left(a_{r, \lfloor nx \rfloor }^{\{n\}}\right)^4   \\
& =: \mathfrak{S}_{n}
\end{align*}
 We know from 
(i) of lemma \ref{as_1_d} that for every $r \in \{1, ..., d\}$ 
$$a_{r, \lfloor n x\rfloor  + 1}^{\{n\}}  =  \left(\frac{\lambda + x}{\lambda}\right)^{r-\alpha} + O\left(\frac{1}{n}\right) \le  \left(\frac{\lambda + 1}{\lambda}\right)^{d-\alpha} + O\left(\frac{1}{n}\right)$$
and for $r = 0$, 
$$a_{r, \lfloor n x\rfloor  + 1}^{\{n\}}  =  \left(\frac{\lambda}{\lambda+x }\right)^{1+\alpha} + O\left(\frac{1}{n}\right) \le 1+   O\left(\frac{1}{n}\right) $$
uniformly for $x \in [0,1]$. In conclusion
$$\mathfrak{S}_{n} = \frac{16\, \mathcal{C}}{n  \varepsilon} \cdot \left(\frac{\lambda + 1}{\lambda}\right)^{4(d-\alpha)} + O\left(\frac{1}{n^2}\right) = O\left(\frac{1}{n}\right)$$
and this proves point (b). \\

Therefore, it follows from Lemma \ref{increasingproc_lem_d} that  
as $n \to +\infty$,
\begin{equation}
\label{CLTMG}
\frac{\mathbf{M}_{d, n}^{\{n\}}}{\sqrt{n}}\stackrel{w}{\longrightarrow} N(\mathbf{0}, \Gamma_{d, \alpha, \lambda})
\end{equation}
By definition of $\mathbf{M}_{d,n}^{\{n\}}$ \eqref{bigM}, 
\begin{align*}
\mathbf{K}_{d, n}^{\{n\}} & = \left( \mathcal{A}_{d, n}^{\{n\}}\right)^{-1}  \cdot \left[ \mathbf{M}_{d, n}^{\{n\}} + \mathbf{A}_{d, n}^{\{n\}} \right]\\
& = \left( \mathcal{A}_{d, n}^{\{n\}}\right)^{-1}  \cdot \mathbf{M}_{d, n}^{\{n\}} + \mathfrak{M}_{d, n}^{\{n\}}  + \left( \mathcal{A}_{d, n}^{\{n\}}\right)^{-1}  \cdot  \mathbf{A}_{d, n}^{\{n\}} - \mathfrak{M}_{d, n}^{\{n\}} 
\end{align*}
It follows from lemma \ref{Kxrnn} that
$$
\left( \mathcal{A}_{d, n}^{\{n\}}\right)^{-1}  \cdot  \mathbf{A}_{d, n}^{\{n\}} \stackrel{a.s.}{\longrightarrow} \mathfrak{M}_{d, n}^{\{n\}}, 
$$ 
and by (i) of lemma \ref{as_1_d}
$$ 
\left(\mathcal{A}_{d, n}^{\{n\}} \right) ^{-1} \stackrel{a.s.}{\longrightarrow} \mathfrak{A}_{d, \alpha, \lambda}
$$
where 
$$
\mathfrak{A}_{d, \alpha, \lambda} = \operatorname{Diag}\left(\left(\left[\frac{\lambda+1}{\lambda}\right]^{r-\alpha}\right)_{r \in \{0, ..., d\}}\right).
$$
In conclusion, apply Slutsky's lemma to show that
$$
 \frac{\mathbf{K}_{d, n}^{\{n\}} - \mathfrak{M}_{d, m}^{\{n\}} }{\sqrt{n}}\stackrel{w}{\longrightarrow} N(\mathbf{0}, \mathfrak{A}_{d, \alpha, \lambda}^T \cdot \Gamma_{d, \alpha, \lambda}\cdot \mathfrak{A}_{d, \alpha, \lambda})
$$
and since by definition \eqref{Sigma_d}
$$
\mathfrak{A}_{d, \alpha, \lambda}^T \cdot \Gamma_{d, \alpha, \lambda}\cdot \mathfrak{A}_{d, \alpha, \lambda} = \Sigma_{d, \alpha, \lambda}
$$
the proof is concluded.





\section*{Acknowledgments}
C.C. is grateful to B.B. and the Institut de Math\'ematiques de Bordeaux for their hospitality during the visit that led to this collaboration. 


\appendix 
\renewcommand{\theequation}{A.\arabic{equation}}
\renewcommand{\thesubsection}{A.\arabic{subsection}}
\setcounter{thm}{0}
\renewcommand{\thethm}{A.\arabic{thm}}

\section*{Appendix}\label{app3}
\addcontentsline{toc}{section}{Appendix}

\subsection{The Chinese Restaurant Process}\label{app_pyp}
The sequential construction of the Ewens--Pitman model was introduced in the seminal work of \cite[Proposition~9]{Pit(95)}.
For $\alpha \in [0,1)$ and $\theta > -\alpha$, it is possible to show that the following recursive construction yields  an exchangeable random partition of the set $[n]$.
\begin{paragraph}
\noindent Conditionally on the number of blocks $K_n = k$ and on the partition subsets  (the ``tables'', in the restaurant metaphor) $\{T_1,\ldots,T_k\}$ with corresponding sizes $(n_1,\ldots,n_k)$, the partition of $[n+1]$
is obtained by extending that of $[n]$ in such a way that the element $n+1$ is assigned
to an existing subset $T_i$, for $1 \le i \le k$, with probability
\[
\frac{n_i - \alpha}{n + \theta},
\]
or initiates a new subset with probability
\[
\frac{\alpha k + \theta}{n + \theta}.
\]
Since $n = n_1 + \cdots + n_k$, we clearly have
\begin{equation*}
\frac{1}{n+\theta}\sum_{i=1}^{k} (n_i - \alpha) + \frac{\alpha k + \theta}{n+\theta}
= \frac{n - \alpha k + \alpha k + \theta}{n + \theta} = 1.
\end{equation*}
From now on, for $K_n \in \{1,\ldots,n\}$, let
\[
\mathbf{N}_n = (N_{1,n},\ldots,N_{K_n,n})
\]
denote the sizes of the partition subsets $\{T_1,\ldots,T_{K_n}\}$. As shown in
\cite[Proposition~9]{Pit(95)}, the above construction yields the joint distribution
\eqref{epsm} of the random vector $(K_n,\mathbf{N}_n)$.
\end{paragraph}

\begin{paragraph}
\noindent For $r = 1, ..., n$, we introduce the notation $K_{r, n}$  to denote the number of tables with given numerosity $r$. Note that the above construction immediately yields a recursive structure not only for $K_n$, but also for the $K_{r, n}$. In fact, letting
$$
\mathbf{K}_n = \left(K_n, K_{1,n}, ..., K_{n,n}\right)
$$
and
$$
\mathcal{F}_{n} = \sigma\left(\mathbf{K}_1, ..., \mathbf{K}_n\right)
$$
there hold
\[
P(K_{n+1} = K_n + 1 \mid \mathcal{F}_{n} ) = \frac{\alpha K_n + \theta}{n + \theta}.
\]
and  for $r \ge 1$, 
\begin{equation}\label{CRP}
P \left[ K_{r, n+1}  = K_{r, n} + k\, | \, \mathcal{F}_{n}\right] = \begin{cases}
p_{r, n} & \text{ if } k=1  \\
q_{r, n}& \text{ if } k=-1 \\
1- p_{r, n} - q_{r, n} & \text{ if } k=0
\end{cases}
\end{equation}
with 
$$
 p_{r, n} = \begin{cases}
\frac{\alpha K_n + \theta }{\theta+ n} & \text{ if } r = 1\\[2pt]
\frac{(r-1-\alpha) K_{r-1, n}}{\theta + n} & \text{ if } r \ge 2
\end{cases}
$$
and 
$$
 q_{r, h-1}^{\{n\}} = \frac{(r-\alpha) K_{r, n}}{\theta + n} 
$$
\end{paragraph}


\subsection{Details on the proof of lemma \ref{Kxrnn}}\label{app3_1}
\subsubsection{$r \ge 1$, $\alpha \in [0,1)$}

From \cite[Proposition 1]{Fav(13)} we recover the following expression for the (falling) factorial moments of $K_{r, \lfloor xn \rfloor}^{\{n\}}$:
\begin{align*}
\mathbb{E} \left[\left(K_{r, \lfloor xn \rfloor }^{\{n\}}\right)_{\downarrow s} \right] & = \left(\frac{(1-\alpha)_{\uparrow r-1}}{r!} \right)^s \, \frac{\Gamma(xn + 1 -\mathfrak{r}_x )}{\Gamma(xn -rs + 1 -\mathfrak{r}_x )} \, \frac{\Gamma(\lambda n+1)}{\Gamma((\lambda +x ) n - \mathfrak{r}_x)} \times \\
& \quad \times \alpha^{s-1} \, \frac{\Gamma\left(\frac{\lambda n}{\alpha} +s\right)}{\Gamma\left(\frac{\lambda n}{\alpha} +1\right)} \, \frac{\Gamma((\lambda +x ) n + s\alpha -sr - \mathfrak{r}_x)}{\Gamma(\lambda n + s\alpha)}
\end{align*}
where we are denoting by $\mathfrak{r}_x = xn - \lfloor xn \rfloor$. Since $0 \le \mathfrak{r}_x < 1$ for all $x \in [0, 1]$ and for all $n \in \mathbb{N}$, we can apply the asymptotic expansion for ratios of gamma functions \cite[Equation 1]{TE(51)}: as $|z |\to +\infty$, 
$$
\frac{\Gamma(z+\beta)}{\Gamma(z+\gamma) } = z^{\beta -\gamma}\cdot  \left[1+ \frac{(\beta - \gamma)(\beta+\gamma-1)}{2z} + O\left(|z|^{-2}\right) \right]
$$
This entails that \eqref{falling_expansion} holds with 
\begin{displaymath}
\mathcal{S}_s(x) =  m^s_{r; \alpha, \lambda}(x)   \cdot   [s^2 A(x) + s B(x)]
\end{displaymath}
where
\begin{align*}
A (x)& =  \frac{x^2 \alpha (1-\alpha) + x [\lambda \alpha - 2r\alpha \lambda  ] -\lambda^2r^2 }{2x\lambda (x+\lambda)}\\
B(x) & =  \frac{x\left[r(1-2\mathfrak{r}_x) -\alpha (1+2\mathfrak{r}_x) \right] +  \lambda r (1-2\mathfrak{r}_x)  }{2x (x+\lambda)}
\end{align*}
do not depend on $s$. In particular, note that since 
$$\lim_{x \to 0^+} x\cdot A(x),   \lim_{x \to 0^+} x\cdot B(x) \in (0, +\infty) $$
and 
$$\lim_{x \to 0^+} \frac{m_{r; \alpha, \lambda}(x)}{x^r} = \frac{(1-\alpha)_{ \uparrow (r-1)}}{r!} \cdot \lambda^{1-r}, $$
with the particular case
$$\lim_{x \to 0^+} \frac{m_{1; \alpha, \lambda}(x)}{x} = 0, $$
then for every $r, s \ge 1$, 
$$\lim_{x \to 0^+} S_s(x) = 0. $$
For what concerns \eqref{Bis0},  
\begin{align*}
\mathcal{B}_{r; \alpha, \lambda}(x)  &=\left( m_{r; \alpha, \lambda}(x)  \right)^4  \cdot \left\{ -4 \left[A(x) + B(x) \right] + 6[4A(x) + 2B(x)] \right.\\
& \quad \left.-4 \left[9A(x)+ 3B(x)] + [16A(x) + 4B(x)\right]\right\} \\
& = \left( m_{r; \alpha, \lambda}(x)  \right)^4   \cdot \left[A(x) \cdot (-4+24-36+16)  + B(x) \cdot (-4+12-12+4)\right]\\
& = 0.
\end{align*}

\subsubsection{$r = 0$, $\alpha \in (0,1)$}
Based on the distribution of $K_{n}$ \cite[Equation 3.11]{Pit(06)}, a direct calculation shows that
\begin{displaymath}
\mathbb{E} \left[(K_{h} ^{\{n\}})_{\downarrow s}\right] =\frac{\Gamma(\lambda n /\alpha + s)}{\Gamma(\lambda n /\alpha)}
\sum_{i = 0}^s (-1)^{s-i} \binom{s}{i} \frac{\Gamma(\theta + i\alpha + h)}{\Gamma(\lambda n + i\alpha)} \frac{\Gamma(\lambda n )}{\Gamma(\lambda n +h)} 
\end{displaymath}
so that
$$
\mathbb{E} \left[(K_{\lfloor xn \rfloor} ^{\{n\}})_{\downarrow s}\right] = \frac{\Gamma\left(\frac{\lambda n}{\alpha }+s \right)}{\Gamma\left(\frac{\lambda n}{\alpha} \right)} \sum_{k = 0}^s (-1)^{s-k} \binom{s}{k} \frac{\Gamma((\lambda + x) n +k\alpha - \mathfrak{r}_{x}) \, \Gamma(\lambda n)}{\Gamma(\lambda n +k \alpha) \, \Gamma((\lambda +x)n- \mathfrak{r}_{x} )}
$$
where we are denoting by $\mathfrak{r}_x = xn - \lfloor xn \rfloor$. Since $0 \le \mathfrak{r}_x < 1$ for all $x \in [0, 1]$ and for all $n \in \mathbb{N}$, we can apply the asymptotic expansion for ratios of gamma functions \cite[Equation 1]{TE(51)} to obtain that \eqref{falling_expansion} holds with 
$$
\mathcal{S}_s(x) = \begin{cases} 
 \frac{(1-\alpha) x}{2  (\lambda +x)}\left(\frac{\lambda +x}{\lambda}\right)^{\alpha} & \text{if } s = 1\\
  \frac{s(s-1)\alpha}{\lambda }\, m^s_{\alpha, \lambda}(x) -  \left(\frac{\lambda}{\alpha}\right)^2 \, \frac{\alpha x s}{2\lambda (\lambda +x)} \left(\frac{\lambda +x}{\lambda }\right)^\alpha \times&  \\
\quad   \times \left[\left(\frac{\lambda +x}{\lambda }\right)^\alpha (\alpha s -1) + 1 - \alpha \right] m^{s-2}_{\alpha, \lambda}(x)& \text{if } s \ge 2. 
\end{cases}
$$
It remains to show \eqref{Bis0}: 
\begin{align*}
\mathcal{B}_{0;\alpha, \lambda}(x)  & = -4 \, m^3_{\alpha, \lambda}(x) S_1(x)  + 6\, m^2_{\alpha, \lambda}(x) \, S_2(x) -4 m_{\alpha, \lambda}(x)\, S_3(x) + S_4(x)\\
& = m^3_{\alpha, \lambda}(x) \cdot \left[- \frac{2(\alpha -1)}{\lambda +x} \left(\frac{\lambda + x}{\lambda}\right)^\alpha \right] \\
& \quad + m^2_{\alpha, \lambda}(x) \cdot \frac{x\lambda}{x+\lambda} \, \left[  \left(\frac{\lambda + x}{\lambda}\right)^{2\alpha} \left(6 - \frac{2(4\alpha -1)}{\alpha} \right)- \left(\frac{\lambda + x}{\lambda}\right)^{\alpha} \, \frac{2(1-\alpha)}{\alpha} \right]\\
& = 0,
\end{align*}
where the last equality can be easily verified by substituting the expression for $m_{\alpha, \lambda}(x) $.

\subsubsection{$r = 0$, $\alpha = 0$}
\begin{paragraph}
\noindent Arguing as in \cite[Appendix C.2]{Con(25a)}, we obtain
\begin{align*}
\mathbb{E} \left[(K_{\lfloor xn \rfloor} ^{\{n\}})_{\downarrow 1}\right] & = n \cdot \lambda\,  \Phi_0(n, \lambda, x)\\
\mathbb{E} \left[(K_{\lfloor xn \rfloor} ^{\{n\}})_{\downarrow 2}\right] & = n^2 \cdot \lambda^2 \, \left[  \Phi_0^2(n, \lambda, x) +  \Phi_1(n, \lambda, x )\right] \\
\mathbb{E} \left[(K_{\lfloor xn \rfloor} ^{\{n\}})_{\downarrow 3}\right] & = n^3 \cdot \lambda^3 \, \left[\Phi_0^3(n, \lambda, x) + 3 \Phi_0(n, \lambda, x) \Phi_1(n, \lambda, x) + \Phi_2(n, \lambda, x) \right] \\       
\mathbb{E} \left[(K_{\lfloor xn \rfloor} ^{\{n\}})_{\downarrow 4}\right] & = n^4 \cdot \lambda^4 \, \left[\Phi_0^4(n, \lambda, x) + 6 \Phi_0^2(n, \lambda, x) \Phi_1(n, \lambda, x) + 4 \Phi_0(n, \lambda, x) \Phi_2(n, \lambda, x) \right.\\
      & \quad \quad \quad \quad \left.+ 3 \Phi_1^2(n, \lambda, x) + \Phi_3(n, \lambda, x) \right]
 \end{align*}
where
\begin{displaymath}
\Phi_i(n, \lambda, x) := \psi^{(i)} \left(n (\lambda+x)\right)  -\psi^{(i)}\left(n \lambda\right)
\end{displaymath}
and $\psi^{(i)}$ denotes the polygamma function \cite[Section \href{http://dlmf.nist.gov/5.15} {(5.15)}]{nist}. Making use of the asymptotic expansions of the polygamma functions for large argument 
\cite[Equation \href{http://dlmf.nist.gov/5.11.E2} {(5.11.2)}]{nist}, we obtain that, uniformly for $x \in [0,1]$,
\begin{align*}
\Phi_0(n, \lambda, x) & = L(x) + \frac{1}{n} \cdot \frac{x}{2\lambda (x+\lambda)} + O\left(n^{-2}\right)\\
\Phi_1(n, \lambda, x) & = - \frac{1}{n}\cdot  \frac{x}{\lambda (x+\lambda)} + O\left(n^{-2}\right)\\
\Phi_2(n, \lambda, x) & =  O\left(n^{-2}\right)\\
\Phi_3(n, \lambda, x) & =  O\left(n^{-3}\right)\\
\end{align*}
where $L(x)  = \log \left(\frac{\lambda+x}{\lambda} \right)$
There follows that equation \eqref{falling_expansion} holds with $$
\mathcal{S}_s(x) = \begin{cases} 
  \frac{x}{2(\lambda+x)} & \text{if } s = 1\\
\frac{x\lambda}{\lambda+x} [L(x)-1]  & \text{if } s = 2\\
3 \frac{x \lambda^2}{\lambda+x} \left[\frac{1}{2} L^2(x)  -L(x)\right] & \text{if } s = 3\\
2\frac{x \lambda^3}{\lambda+x} [L^3(x) - 3L^2(x)] & \text{if } s = 4
\end{cases}.
$$
Whence,
      \begin{align*}
 \mathcal{B}_{0; 0, \lambda}(x) & = -4 \, \lambda^3 L^3(x) \, \mathcal{S}_1(x)    + 6\, \lambda^2 L^2(x)  \mathcal{S}_2(x)  -4 \lambda L(x)\, \mathcal{S}_3(x) + \mathcal{S}_4(x).\\
 & = \frac{ x \lambda^3}{\lambda +x}\cdot  \left[ L^3(x) \cdot  \left( -2 + 6 - 6+2\right)  + L^2(x) \cdot (-6+12-6) \right]\\
       &= 0.
      \end{align*}
\end{paragraph}

\subsection{Details on the proof of Lemma \ref{mart_lem}} \label{sec_martingale}
It follows from definition \eqref{an_r} that for every $d, n, h$, 
$$
\mathcal{A}_{d, h-1}^{\{n\}} = \operatorname{Diag} \left( \left(\gamma_{r, h}^{\{n\}}\right)_{r \in \{0, ..., d\}}\right) \cdot \mathcal{A}_{d, h}^{\{n\}}
$$
and
$$
\mathbf{A}_{d, h}^{\{n\}} = \mathbf{A}_{d,  h-1}^{\{n\}} + \mathcal{A}_{d, h}^{\{n\}} \cdot  \left(\beta_{r, h-1}^{\{n\}}\right)_{r \in \{0, ..., d\}}
$$
Hence,
\begin{align}\label{deltaform_d_star}
\nonumber \mathbf{\Delta}_{d, h}^{\{n\}} &= \mathcal{A}_{d, h}^{\{n\}} \cdot \mathbf{K}_{d,h}^{\{n\}}  -  \mathbf{A}_{d, h}^{\{n\}} - \mathcal{A}_{d, h-1}^{\{n\}} \cdot \mathbf{K}_{d,h-1}^{\{n\}}  + \mathbf{A}_{d, h-1}^{\{n\}} \\
\nonumber & = \mathcal{A}_{d, h}^{\{n\}}  \cdot \left[ \mathbf{K}_{d,h-1}^{\{n\}} + \mathbf{\Xi}_{d, h}^{\{n\}} \right] -  \mathbf{A}_{d, h-1}^{\{n\}}  - \mathcal{A}_{d, h}^{\{n\}} \cdot  \left(\beta_{r, h-1}^{\{n\}}\right)_{r \in \{0, ..., d\}} \\
\nonumber & \quad -    \operatorname{Diag} \left( \left(\gamma_{r, h}^{\{n\}}\right)_{r \in \{0, ..., d\}}\right) \cdot \mathcal{A}_{d, h}^{\{n\}} \cdot \mathbf{K}_{d,h-1}^{\{n\}}  + \mathbf{A}_{d, h-1}^{\{n\}} \\
\nonumber &= \mathcal{A}_{d, h}^{\{n\}}  \cdot  \left\{ \left[ \mathbf{I}_{d+1}-    \operatorname{Diag} \left( \left(\gamma_{r, h}^{\{n\}}\right)_{r \in \{0, ..., d\}}\right)\right] \cdot  \mathbf{K}_{d,h-1}^{\{n\}} 
+ \, \mathbf{\Xi}_{d, h}^{\{n\}} -   \left(\beta_{r, h-1}^{\{n\}}\right)_{r \in \{0, ..., d\}}  \right\}\\
& = \mathcal{A}_{d, h}^{\{n\}}  \cdot  \left[ \mathbf{\Xi}_{d, h}^{\{n\} }- \left(p_{r, h-1}^{\{n\}} - q_{r, h-1}^{\{n\}}\right)_{r \in \{0, ..., d\}} \right], 
\end{align}
where the last equality follows from \eqref{xi_mean_r}. Taking conditional expectations on both side we obtain
$$
\mathbb{E} \left[\mathbf{\Delta}_h^{\{n\}}  \, | \, \mathcal{F}_{h-1}^{\{n\}} \right] = \mathcal{A}_h^{\{n\}} \cdot \mathbb{E} \left[ \mathbf{\Xi}_{d, h}^{\{n\} }\, | \, \mathcal{F}_{h-1}^{\{n\}} \right]  -  \left(p_{r, h-1}^{\{n\}} - q_{r, h-1}^{\{n\}}\right)_{r \in \{0, ..., d\}}  =\mathbf{0},
$$
which concludes the proof.

\subsection{Details on the proof of Lemma \ref{increasingproc_lem_d}}\label{app3_2}
We begin by proving the following lemma:
\begin{lem}\label{as_1_d}
As $n \to +\infty$, for all $i  \in \{1, ..., d+1\}$, 
\begin{enumerate}[(i)]
\item 
$$\lim_{n \to +\infty} a_{i-1, \lfloor n x\rfloor  + 1}^{\{n\}} = \left(\frac{\lambda + x}{\lambda}\right)^{i-1-\alpha}$$
\item
$$
p_{i-1, \lfloor nx \rfloor}^{\{n\}} \stackrel{a.s.}{\longrightarrow} \begin{cases}
\left(\frac{\lambda + x}{\lambda}\right)^{\alpha -1 } & \text {if } i=1\\
\frac{(1-\alpha)_{(i-2) \uparrow} \,  \lambda^{1-\alpha}}{(i-2)!}  \cdot x^{i-2} (x+\lambda)^{\alpha -i-1}& \text {if } i\ge 2\\
 \end{cases}
$$
\item
$$
q_{i-1, \lfloor nx \rfloor}^{\{n\}} \stackrel{a.s.}{\longrightarrow} \begin{cases}
0& \text {if } i=1\\
\frac{(1-\alpha)_{(i-1) \uparrow} \,  \lambda^{1-\alpha}}{(i-1)!}  \cdot x^{i-1} (x+\lambda)^{\alpha -i}& \text {if } i\ge 2\\
 \end{cases}
$$
\end{enumerate}
where convergence in (i), (ii) and (ii) is uniform for $x \in [0,1]$.
\end{lem}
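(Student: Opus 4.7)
The plan is to treat part (i) by a direct Gamma-function computation and to deduce parts (ii)--(iii) as almost immediate consequences of the uniform strong law of Lemma \ref{Kxrnn}, applied to the definitions \eqref{p}--\eqref{q} of $p_{r,h-1}^{\{n\}}$ and $q_{r,h-1}^{\{n\}}$.

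For (i), I would first observe that the product in \eqref{an_r} telescopes to a ratio of Gamma functions:
\begin{equation*}
a_{r,h}^{\{n\}} \;=\; \prod_{k=1}^{h-1}\frac{\lambda n + k}{\lambda n + k - r + \alpha} \;=\; \frac{\Gamma(\lambda n + h)\,\Gamma(\lambda n + 1 - r + \alpha)}{\Gamma(\lambda n + 1)\,\Gamma(\lambda n + h - r + \alpha)}.
\end{equation*}
Setting $h = \lfloor nx\rfloor + 1$ and invoking the Tricomi--Erd\'elyi asymptotic expansion \cite[Equation 1]{TE(51)} (already used in Appendix \ref{app3_1}) on both ratios yields
\begin{equation*}
a_{r,\lfloor nx\rfloor+1}^{\{n\}} \;=\; \bigl(\lambda n + \lfloor nx\rfloor\bigr)^{r-\alpha}\,(\lambda n)^{\alpha-r}\bigl(1 + O(n^{-1})\bigr) \;=\; \Bigl(\tfrac{\lambda + x}{\lambda}\Bigr)^{r-\alpha}\bigl(1 + O(n^{-1})\bigr),
\end{equation*}
with the remainder uniform in $x \in [0,1]$ since all Gamma arguments are bounded below by a positive multiple of $n$. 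Specializing to $r = i-1$ gives the claim.

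For (ii) and (iii), I would divide numerator and denominator of the expressions \eqref{p} and \eqref{q} by $n$. The denominator $(\lambda n + \lfloor nx\rfloor)/n$ converges uniformly to $\lambda + x$, while Lemma \ref{Kxrnn} supplies the uniform almost-sure limit $K_{r,\lfloor nx\rfloor}^{\{n\}}/n \to m_{r;\alpha,\lambda}(x)$ for each $r \ge 0$. For $r \in \{0,1\}$, the algebraic identity $\alpha\, m_{0;\alpha,\lambda}(x) + \lambda = \lambda^{1-\alpha}(\lambda+x)^{\alpha}$, valid for $\alpha \in (0,1)$ and extending by continuity to $\alpha = 0$, reduces $p_{0,\lfloor nx\rfloor}^{\{n\}}$ to $((\lambda+x)/\lambda)^{\alpha-1}$. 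For $r \ge 2$, the prefactor $(r-1-\alpha)$ in $p_{r,h-1}^{\{n\}}$ combines with the rising factorial $(1-\alpha)_{\uparrow r-2}$ from $m_{r-1;\alpha,\lambda}(x)$ via $(1-\alpha)_{\uparrow r-2}\cdot(r-1-\alpha) = (1-\alpha)_{\uparrow r-1}$, yielding the stated expression; an entirely analogous and slightly simpler manipulation handles $q_{r,\lfloor nx\rfloor}^{\{n\}}$ starting from its definition in \eqref{q}.

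The main potential obstacle is the uniformity in $x \in [0,1]$. For (i) this is controlled automatically by the uniform validity of the Tricomi--Erd\'elyi expansion once the principal Gamma argument dominates the shifts, as is the case here. For (ii)--(iii), the only delicate region is $x$ near $0$, where $\lfloor nx\rfloor$ can vanish and $p_{r,0}^{\{n\}}$, $q_{r,0}^{\{n\}}$ take trivial values; however, the limiting expressions on the right-hand sides are continuous in $x$ at $x=0$ and take the correct boundary values, so the uniform SLLN of Lemma \ref{Kxrnn} together with uniform continuity of the explicit quotients promotes the pointwise convergence to uniform convergence on all of $[0,1]$.
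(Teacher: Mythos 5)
Your proposal is correct and follows essentially the same route as the paper's own argument at the start of appendix \ref{app3_2}: part (i) by telescoping $a_{r,h}^{\{n\}}$ into a ratio of Gamma functions and applying the Tricomi--Erd\'elyi expansion \cite[Equation 1]{TE(51)} uniformly in $x$, and parts (ii)--(iii) by dividing numerator and denominator of \eqref{p} and \eqref{q} by $n$ and invoking the uniform SLLN of Lemma \ref{Kxrnn}. One small remark: for $i\ge 2$ in (ii) your computation actually yields the exponent $(x+\lambda)^{\alpha-i+1}$ (since $m_{i-2;\alpha,\lambda}(x)\propto (x+\lambda)^{\alpha-i+2}$ gets divided by $x+\lambda$), which is also what consistency with the $i=1,2$ cases forces because $p_{0,h-1}^{\{n\}}$ and $p_{1,h-1}^{\{n\}}$ share the same formula in \eqref{p}; so the exponent $\alpha-i-1$ printed in the statement is a typo rather than a discrepancy in your argument, and you should not claim the computation matches the printed expression verbatim.
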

\begin{proof}
By definition of $a_{i-1, \lfloor n x\rfloor  + 1}^{\{n\}} $ \eqref{an_r}, 
\begin{align*}
a_{i-1, \lfloor n x\rfloor  + 1}^{\{n\}} & = \frac{(\lambda n + 1)_{\lfloor nx \rfloor \uparrow} }{(\lambda n -i + 2 + \alpha)_{\lfloor nx \rfloor \uparrow }}\\
& = \frac{\Gamma((\lambda +x) n  + 1 - \mathfrak{r}_x)\cdot \Gamma(\lambda n - i+2+\alpha)}{\Gamma((\lambda +x) n  -i+2+\alpha - \mathfrak{r}_x)\cdot \Gamma(\lambda n +1 )}
\end{align*}
where we are denoting by $\mathfrak{r}_x = xn - \lfloor xn \rfloor$. Since $0 \le \mathfrak{r}_x < 1$ for all $x \in [0, 1]$ and for all $n \in \mathbb{N}$, we can apply the asymptotic expansion for ratios of gamma functions \cite[Equation 1]{TE(51)} to obtain the desired asymptotics.

\begin{paragraph}
\noindent For what concerns $p_{r, \lfloor nx \rfloor}^{\{n\}}$, recall 
\begin{align*}
 p_{r, h-1}^{\{n\}} & = \begin{cases}
\frac{\alpha K_{\lfloor nx \rfloor }^{\{n\}} + \lambda n}{\lambda n + \lfloor nx \rfloor} & \text{ if } r =0, 1\\
\frac{(r-1-\alpha) K_{r-1, \lfloor nx \rfloor }^{\{n\}}}{\lambda n + \lfloor nx \rfloor } & \text{ if } r \ge 2
\end{cases}\\
& =  \begin{cases}
\frac{\alpha}{\left(x+ \lambda - \frac{\mathfrak{r}_x}{n} \right) } \cdot \frac{K_{\lfloor nx \rfloor }^{\{n\}}}{n}  + \frac{\lambda}{\left(x+ \lambda - \frac{\mathfrak{r}_x}{n} \right)} & \text{ if } r =0, 1\\
\frac{(r-1-\alpha)}{\left(x+ \lambda - \frac{\mathfrak{r}_x}{n} \right)} \cdot \frac{ K_{r-1, \lfloor nx \rfloor }^{\{n\}}}{n } & \text{ if } r \ge 2
\end{cases}
\end{align*}
since $\sup_{x} \mathfrak{r}_x / n < 1/n \to 0$, it sufficies to apply lemma \ref{Kxrnn} to obtain the desired asymptotics.
\end{paragraph}

\begin{paragraph}
\noindent Analogously, 
$$ q_{r, h-1}^{\{n\}} = \begin{cases}
0 & \text{ if } r =0\\
\frac{(r-\alpha) K_{r, \lfloor nx \rfloor}^{\{n\}}}{\lambda n +\lfloor nx \rfloor } & \text{ if } r \ge 1
\end{cases}
= \begin{cases}
0 & \text{ if } r =0\\
\frac{(r-\alpha)}{\left(x+ \lambda - \frac{\mathfrak{r}_x}{n} \right)} \cdot  \frac{ K_{r, \lfloor nx \rfloor}^{\{n\}}}{n}& \text{ if } r \ge 1
\end{cases}
$$
and the desired asymptotics follows from lemma \ref{Kxrnn}.
\end{paragraph}
\end{proof}

Recalling the definitions of $\left(P_{ \lfloor n x\rfloor }^{\{n\}}\right)_{i, j} $ and $ \left(R_{ \lfloor n x\rfloor }^{\{n\}}\right)_{i, j}$, lemma \ref{as_1_d} implies

$$
\left(P_{\lfloor nx\rfloor}^{\{n\}}\right)_{i,j}
\stackrel{a.s.}{\longrightarrow}
\begin{cases}
0
& \text{if } j \ge i+2 \\[6pt]

-\,c_{i-1}\,\lambda^{1-\alpha}\,x^{\,i-1}(x+\lambda)^{\,\alpha-i} 
& \text{if } j=i+1, \ i\ge2 \\[8pt]

\left(\frac{x+\lambda}{\lambda}\right)^{\alpha-1}
& \text{if } i=j=1 \text{ or } i=1,j=2 \\[6pt]

c_{i-2}\,\lambda^{1-\alpha}\,x^{\,i-2}(x+\lambda)^{\,\alpha-i-1} & \\
\quad +
c_{i-1}\,\lambda^{1-\alpha}\,x^{\,i-1}(x+\lambda)^{\,\alpha-i}
& \text{if } i=j\ge2.
\end{cases}
$$
and 
$$
\left(R_{\lfloor nx\rfloor}^{\{n\}}\right)_{i,j}
\stackrel{a.s.}{\longrightarrow}
\begin{cases}
\left(\frac{x+\lambda}{\lambda}\right)^{2(\alpha-1)}
& \text{if } i=j=1, \\[8pt]

\lambda^{2(1-\alpha)} 
\Big[
c_{j-2}\,x^{\,j-2}(x+\lambda)^{\,2\alpha-j-2}
-
c_{j-1}\,x^{\,j-1}(x+\lambda)^{\,2\alpha-j-1}
\Big]
& \text{if } i=1,\ j\ge2 \\[6pt]

\lambda^{2(1-\alpha)}\Big\{
c_{i-2}\,c_{j-2}\,x^{\,i+j-4}(x+\lambda)^{\,2\alpha-i-j-2} \\[2pt]
\quad
- \big[ c_{i-1}\,c_{j-2} + c_{i-2}\,c_{j-1} \big]\,x^{\,i+j-3}(x+\lambda)^{\,2\alpha-i-j-1} \\[1pt]
\quad
+ c_{i-1}\,c_{j-1}\,x^{\,i+j-2}(x+\lambda)^{\,2\alpha-i-j} \Big\}
& \text{if } i,j\ge2.
\end{cases}
$$
uniformly for $x \in [0,1]$, where 
$$
c_r = \frac{(1-\alpha)_{r \uparrow} }{r!}
$$
Finally, recalling that
\begin{equation*}
F^{\{n\}}_{i, j}(x)  = a_{i-1, \lfloor n x\rfloor  + 1}^{\{n\}} a_{j-1, \lfloor n x\rfloor  + 1}^{\{n\}} \cdot \left[ \left(P_{ \lfloor n x\rfloor }^{\{n\}}\right)_{i, j}  - \left(R_{ \lfloor n x\rfloor }^{\{n\}}\right)_{i, j} \right]
\end{equation*}
all the above results entail $F^{\{n\}}_{i,j}(x) \stackrel{a.s.}{\longrightarrow} f_{i,j}(x)$ uniformly for $x \in [0,1]$, where 
\begin{equation}\label{fij}
\small
f_{i,j} (x) = \begin{cases}
 \left(\frac{\lambda+x}{\lambda}\right)^{-1-\alpha} -  \left(\frac{\lambda+x}{\lambda}\right)^{-2} & \text{ if } i = j = 1\\[10pt]
 \left(\frac{\lambda+x}{\lambda}\right)^{-\alpha} - \lambda \cdot \left(x+\lambda\right)^{-3} + \lambda \, c_1 \cdot x \, (x+\lambda)^{-2}& \text{ if } i = 1, j = 2\\[10pt]
\lambda^{3-j} \cdot \left[ - c_{j-2} \cdot x^{j-2} \left(x+\lambda\right)^{-3} +  c_{j-1} \cdot x^{j-1} \, (x+\lambda)^{-2} \right]& \text{ if } i = 1, j \ge 3\\[10pt]
\lambda^{\alpha + 3 -2i} \left[c_{i-2}\cdot x^{i-2}(x+\lambda)^{i-3-\alpha} + c_{i-1} \cdot x^{i-1}(x+\lambda)^{i-2-\alpha} \right]&\\[10pt]
 \quad - \lambda^{4-2i}\Big\{
c_{i-2}^2 \cdot x^{2i-4}(x+\lambda)^{-4} \\[1pt]
\quad \qquad \qquad
- 2 c_{i-1}\,c_{i-2} \cdot x^{2i-3}(x+\lambda)^{-3} \\[1pt]
\quad \qquad \qquad 
+ c_{i-1}^2 \cdot \,x^{2i-2}(x+\lambda)^{-2} \Big\} & \text{ if }i = j \ge 2\\[10pt]
- c_{i-1} \lambda^{2+\alpha-2i} \cdot x^{i-1} (x+\lambda)^{i-1-\alpha}\\
\quad - \lambda^{3-2i}\Big\{
c_{i-2}\,c_{i-1}\cdot x^{2i-3}(x+\lambda)^{-4} \\[1pt]
\qquad \qquad \quad
- \big[ c_{i-1}\,c_{i-1} + c_{i-2}\,c_{i} \big]\cdot x^{2i-2}(x+\lambda)^{-3} \\[1pt]
\qquad \qquad \quad
+ c_{i-1}\,c_{i}\,x^{2i-1}(x+\lambda)^{-2} \Big\}& \text{ if } i \ge 2, j = i+1\\[10pt]
- \lambda^{4-i-j} \cdot \Big\{
c_{i-2}\,c_{j-2}\cdot x^{\,i+j-4}(x+\lambda)^{-4} \\[1pt]
\quad\qquad \qquad
- \big[ c_{i-1}\,c_{j-2} + c_{i-2}\,c_{j-1} \big]\cdot x^{\,i+j-3}(x+\lambda)^{-3} \\[1pt]
\quad\qquad\qquad
+ c_{i-1}\,c_{j-1}\cdot x^{\,i+j-2}(x+\lambda)^{- 2} \Big\}
 & \text{ if } i \ge 2, j \ge i+2\\[10pt]
 f_{j,i} (x)  & \text{ if } i>j
\end{cases}
\end{equation}

All that remains to prove is that 
$$
\mathcal{I}_{i,j} : = \int_{0}^1 f_{i,j} (x) \, \mathrm{d}x = \left(\Gamma_{\alpha, \lambda}\right)_{i,j}.
$$
Exploiting the identities \cite[Equation 3.197.8]{GrRy(07)} and \cite[Equation 15.3.3]{AbSt(64)} we obtain
\begin{align*}
\int_{0}^{1} x^a (x+\lambda)^b  \mathrm{d}x&= \frac{\lambda^{b}}{a+1} \cdot {}_2F_1 \left(a+1, -b ; a+2 ; -\frac{1}{\lambda}\right)\\
& =
\frac{(\lambda+1)^{b}}{\lambda(a+1)} \cdot {}_2F_1 \left(1, a+b+2 ; a+2 ; -\frac{1}{\lambda}\right) \\
& = \frac{(\lambda+1)^{b}}{\lambda(a+1)} \cdot H(a+b+2, a+2)
\end{align*}
Applying such identity allows, after straightforward but tedious computations, to show that for every  $i, j \in \{0, ..., d\}$

\begin{equation}\label{int_clean}
\small
\mathcal{I}_{i,j} (x) = \begin{cases}
s^2_{\alpha, \lambda} & i = j = 1\\[15pt]
\frac{\lambda}{\alpha -1}  - \frac{(\lambda + 1)}{\alpha -1} \cdot  \left( \frac{\lambda}{\lambda+1} \right)^{\alpha}- \left(\frac{\lambda}{\lambda+1}\right)^2+ \frac{(1-\alpha)  \lambda }{2\,  (\lambda+1)}\cdot \log\left(\frac{\lambda+1}{\lambda}\right) & i = 1, j = 2\\[15pt]
\lambda^{2-j} \Big[ 
- c_{j-2} \frac{(\lambda+1)^{-3}}{j-1} \, _2F_1(1, j-3; j-1; -1/\lambda) &\\
\quad \qquad+ c_{j-1} \frac{(\lambda+1)^{-2}}{j} \, _2F_1(1, j-1; j; -1/\lambda)
\Big] & i = 1, j \ge 3\\[15pt]
\lambda^{\alpha + 2 -2i} \Big[
c_{i-2} \frac{(\lambda+1)^{i-3-\alpha}}{i-1} \, _2F_1(1, i-\alpha-2; i-1; -1/\lambda) &\\
\quad \qquad \quad + c_{i-1} \frac{(\lambda+1)^{i-2-\alpha}}{i} \, _2F_1(1, i-\alpha; i; -1/\lambda)
\Big] &\\
\quad - \lambda^{3-2i} \Big[
c_{i-2}^2 \frac{(\lambda+1)^{-4}}{2i-3} \, _2F_1(1, 2i-2; 2i-3; -1/\lambda)& \\
\quad \qquad\quad - 2 c_{i-1} c_{i-2} \frac{(\lambda+1)^{-3}}{2i-2} \, _2F_1(1, 2i-1; 2i-2; -1/\lambda) & \\
\quad \qquad\quad
+ c_{i-1}^2 \frac{(\lambda+1)^{-2}}{2i-1} \, _2F_1(1, 2i; 2i-1; -1/\lambda)
\Big] & i = j \ge 2\\[15pt]
- c_{i-1} \lambda^{1+\alpha-2i} \frac{(\lambda+1)^{i-1-\alpha}}{i} \, _2F_1(1, i-\alpha; i; -1/\lambda) &\\
\quad - \lambda^{2-2i} \Big[
c_{i-2} c_{i-1} \frac{(\lambda+1)^{-4}}{2i-2} \, _2F_1(1, 2i-1; 2i-2; -1/\lambda)& \\
\quad \qquad\quad
- (c_{i-1}^2 + c_{i-2} c_i) \frac{(\lambda+1)^{-3}}{2i-1} \, _2F_1(1, 2i; 2i-1; -1/\lambda) \\
\quad \qquad\quad + c_{i-1} c_i \frac{(\lambda+1)^{-2}}{2i} \, _2F_1(1, 2i+1; 2i; -1/\lambda)
\Big] & i \ge 2, j = i+1\\[15pt]
- \lambda^{3-i-j} \Big[
c_{i-2} c_{j-2} \frac{(\lambda+1)^{-4}}{i+j-3} \, _2F_1(1, i+j-2; i+j-3; -1/\lambda) \\[1pt]
\quad \qquad\quad- (c_{i-1} c_{j-2} + c_{i-2} c_{j-1}) \frac{(\lambda+1)^{-3}}{i+j-2} \, _2F_1(1, i+j-1; i+j-2; -1/\lambda) \\[1pt]
\quad \qquad\quad + c_{i-1} c_{j-1} \frac{(\lambda+1)^{-2}}{i+j-1} \, _2F_1(1, i+j; i+j-1; -1/\lambda)
\Big] & i \ge 2, j \ge i+2\\[15pt]
f_{j,i} (x) & i>j
\end{cases}
\end{equation}
which matches the definition of $\Gamma_{\alpha, \lambda}$.

\addcontentsline{toc}{section}{References}

\end{document}